
\documentstyle[twoside, amsmath, amssymb, ]{article}

\setlength{\oddsidemargin}{1.0in}
\setlength{\evensidemargin}{1.0in}
\setlength{\textwidth}{5.5in}
\setlength{\textheight}{7.8in}
\newtheorem{theorem}{Theorem}[section]
\newtheorem{lemma}[theorem]{Lemma}
\newtheorem{remark}[theorem]{Remark}
\newtheorem{proposition}[theorem]{Proposition}
\newtheorem{corollary}[theorem]{Corollary}
\newtheorem{definition}[theorem]{Definition}

\newenvironment{proof}{\begin{trivlist} \item[]{\em Proof.}}{\end{trivlist}}
\newcount\refno
\refno=0

\newcommand\be{\begin{equation}}
\newcommand\ee{\end{equation}}
\newcommand\bn{\begin{eqnarray}}
\newcommand\en{\end{eqnarray}}
\newcommand\bns{\begin{eqnarray*}}
\newcommand\ens{\end{eqnarray*}}
\newcommand\bd{\begin{definition}}
\newcommand\ed{\end{definition}}
\newcommand\br{\begin{remark}}
\newcommand\er{\end{remark}}
\newcommand\bt{\begin{theorem}}
\newcommand\et{\end{theorem}}
\newcommand\bp{\begin{proposition}}
\newcommand\ep{\end{proposition}}
\newcommand\bc{\begin{corollary}}
\newcommand\ec{\end{corollary}}
\newcommand\bl{\begin{lemma}}
\newcommand\el{\end{lemma}}
\newcommand\pf{\begin{proof}}
\newcommand\qed{\end{proof}\eop}

\newcommand\bR{{\mathbb R}}
\newcommand\bN{{\mathbb N}}

\newcommand\cR{{\cal R}}

\def\eop{\hfill\rule{2.0mm}{2.0mm}}

\makeindex

\begin{document}

\title{Duals of Bernoulli Numbers and Polynomials and Euler Number and Polynomials}
\author{Tian-Xiao He and Jinze Zheng \\
{\small Department of Mathematics}\\
{\small Illinois Wesleyan University}\\
{\small Bloomington, IL 61702-2900, USA}\\
}
\date{}
\maketitle

\begin{abstract}
\noindent A sequence inverse relationship can be defined by a pair of infinite inverse matrices. If the pair of matrices are the same, they define a dual relationship. Here presented is a unified approach to construct dual relationships via pseudo-involution of Riordan arrays. Then we give four dual relationships for Bernoulli numbers and Euler numbers, from which the corresponding dual sequences of Bernoulli polynomials and Euler polynomials are constructed. Some applications in the construction of identities of Bernoulli numbers and polynomials and Euler numbers and polynomials are discussed based on the dual relationships. 

\vskip .2in \noindent AMS Subject Classification: 05A15, 05A05, 11B39,
11B73, 15B36, 15A06, 05A19, 11B83.

\vskip .2in \noindent {\bf Key Words and Phrases:} inverse matrices, dual, Bernoulli numbers, Bernoulli polynomials, Euler numbers, Euler polynomials, Riordan arrays, pseudo-involution. 
\end{abstract}



\setcounter{page}{1} \pagestyle{myheadings} 
\markboth{T. X. He and J. Zheng
}
{Duals of Bernoulli and Euler Numbers and Polynomials}


\section{Introduction}

Krattenthaler defines a class of matrix inverses in \cite{Kra}, which has numerous famous special cases presented by Gould and Hsu \cite{GH73}, Carlitz \cite{Car}, Bressoud \cite{Bre}, Chu and Hsu \cite{CH}, Ma\cite{Ma}, etc. Hsu, Shiue, and one of the authors study Riordan matrix inverses in \cite{HHS}. Let infinite low triangle matrices $(d_{n,k})_{0\leq k\leq n}$ and $(\bar d_{n,k})_{0\leq k\leq n}$ be matrix inverses. 
Then a  sequence inverse relationship can be defined as 

\be\label{-1-0}
f_n=\sum^n_{k=0}d_{n,k}g_k \Longleftrightarrow g_n=\sum^n_{k=0}\bar d_{n,k} f_k,
\ee
where sequences $\{f_n\}_{n\geq 0}$ and $\{g_n\}_{n\geq 0}$ are called the inverse sequences with respect to inverse matrices $(d_{n,k})_{0\leq k\leq n}$ and $(\bar d_{n,k})_{0\leq k\leq n}$. If $d_{n,k}=\bar d_{n,k}$, i.e., $\{ d_{n,k}\}_{0\leq k\leq n}$ is self-inverse, then $\{f_n\}_{n\geq 0}$ and $\{g_n\}_{n\geq 0}$ are said to be a pair of dual sequences (or they are dual each other) with respect to self-inverse matrix $(d_{n,k})_{0\leq k\leq n}$, i.e., they satisfy 

\be\label{-1-1}
f_n=\sum^n_{k=0}d_{n,k}g_k \Longleftrightarrow g_n=\sum^n_{k=0} d_{n,k} f_k,
\ee
If there exists a sequence $\{f_n\}_{n\geq 0}$ that is dual to itself with respect to a self-inverse matrix $(d_{n,k})_{0\leq k\leq n}$, i.e., 

\be\label{-1-2}
f_n=\sum^n_{k=0}d_{n,k}f_k,
\ee
then, $\{ f_n\}_{n\geq 0}$ is called self-dual sequence with respect to the matrix $(d_{n,k})_{0\leq k\leq n}$.

Let $d_{n,k}=\binom{n}{k} (-1)^k$. Then $(d_{n,k})_{0\leq k\leq n}$ is a self-inverse matrix because 

\[
\sum^n_{k=0}\sum^k_{j=0} \binom{n}{k}\binom{k}{j}(-1)^{k+j}=\delta_{n,j},
\]
where $\delta_{n,j}$ is the Kronecker symbol.  Let $\{ a_n\}_{n\geq 0}$ be a sequence of complex numbers. Then $\{ a^\ast_n\}_{n\geq 0}$ defined by 

\be\label{0-0}
a^\ast_n=\sum^n_{k=0} \binom{n} k (-1)^ka_k
\ee
is the dual sequence of $\{ a_n\}_{n\geq 0}$ (see, for example, Graham, Knuth, and Patashnik \cite{GKP}) with respect to $(\binom{n}{k} (-1)^k)$. Hence, 

\be\label{0-1}
a_n=\sum^n_{k=0} \binom{n} k (-1)^ka^\ast_k. 
\ee
$\{ a_n\}_{n\geq 0}$ and $\{ a^\ast_n\}_{n\geq 0}$ are a pair of inverse sequences with $(a^\ast)^\ast_n=a_n$. 
If $a^\ast_n=a_n$, then $\{ a_n\}_{n\geq 0}$ is called the self-dual sequence (see Z. W. Sun \cite{Sun01}). 
For instance, the following number sequences are self-dual sequences with respect to the dual relationship \eqref{0-0} (see, for example, Z. H. Sun \cite{SunZH}): 

\[
\left\{ \frac{1}{2^{n}}\right\},\,\, \left\{ \frac{1}{\binom{n+2m-1}{m}}\right\},\,\, \{ (-1)^{n}B_{n}\},\,\, \{ L_{n}\},\,\, \{ nF_{n-1}\},
\]
where $\{ B_{n}\}$, $\{ L_{n}\}$, and $\{ F_{n}\}$ are the Bernoulli sequence, Lucas sequence, and Fibonacci sequence.

Bernoulli polynomials $B_n(x)$ and Euler polynomials $E_n(x)$ for $n=0,1,\ldots$ are defined by 

\be\label{0-2}
\frac{te^{xt}}{e^t-1}=\sum_{n\geq 0}B_n(x)\frac{t^n}{n!}\,\, \text{and}\,\, 
\frac{2e^{xt}}{e^t+1}=\sum_{n\geq 0}E_n(x)\frac{t^n}{n!}.
\ee
Bernoulli numbers $B_n$ and Euler numbers $E_n$ for $n=0,1,\ldots$ are defined by 

\be\label{0-3}
B_n:=B_n(0)
\quad \text{and}\quad E_n=2^n E_n\left( \frac{1}{2}\right).
\ee
A large literature scatters widely in books and journals on Bernoulli numbers $B_{n}$, and Bernoulli polynomials $B_{n}(x)$ and  Euler numbers $E_n$ and Euler polynomials $E_n(x)$. They can be studied by means of the binomial expressions connecting them, 

\bn\label{0-4}
&&B_{n}(x)=\sum^{n}_{k=0}\binom{n}{k}B_{k}x^{n-k}, \quad n\geq 0,\\
&&E_n(x)=\sum^n_{k=0} \binom{n}{k}\left( x-\frac{1}{2}\right)^{n-k}\frac{E_k}{2^k}, \quad n\geq 0,\label{0-5}
\en
where $E_k=2^kE_k(1/2)$. The study brings consistent attention of researchers working in combinatorics, number theory, etc. 

This paper will study the duals of Bernoulli number sequence and Euler number sequence with respect to $(\binom{n}{k} (-1)^k)$ and other self-inverse matrices. The dual sequence of  $\{ B_n\}_{n\geq 0}$ with respect to $(\binom{n}{k} (-1)^k)$ is denoted by $\{ B^\ast_n\}_{n\geq 0}$, and the corresponding dual Bernoulli polynomials denoted by $\{ B^{\ast}_{n}(x)\}_{n\geq 0}$ is defined similarly to \eqref{0-4} by using the the duals of sequences $\{ B_n\}_{n\geq 0}$, i.e.,

\bn\label{0-6}
&&B^\ast_{n}(x)=\sum^{n}_{k=0}\binom{n}{k}B^\ast_{k}x^{n-k}, \quad n\geq 0,
\en
where $B^{\ast}_{n}$ are the duals of the Bernoulli numbers $\{ B_n\}_{n\geq 0}$, i.e.,  

\bn\label{0-8}
&&B^\ast_n=\sum^n_{k=0}\binom{n}{k} (-1)^k B_k, \quad n\geq 0.
\en

Hence, there are several questions raised: (1) With respect to which self-inverse matrix the Bernoulli number sequence $\{B_n\}$ is self-dual? (2) What is the relationship between the self-inverse matrix in (1) and the self-inverse matrix for $\{ (-1)^nB_n\}_{n\geq 0}$, $(\binom{n}{k} (-1)^k)$? And (3) does there exist a unified approach to construct self-inverse matrices and self-duals with respect to a self-inverse matrix? We will answer those questions by using the Riordan array theory.

Riordan arrays are infinite, lower triangular matrices defined by the
generating function of their columns. They form a group, called the
Riordan group (see Shapiro et al. \cite{SGWW}). Some of the main results on
the Riordan group and its application to combinatorial sums and identities
can be found in \cite{CJ}--\cite{CKS3}, \cite{DFR05}--\cite{GH}, 
\cite{He08}--\cite{JLN}, \cite{LMMS}-\cite{LMMS2}, \cite{MRSV}-\cite{MSV}, \cite{Nkw}, 
\cite{PW}-\cite{PW00}, \cite{Rio}-\cite{Sha2}, \cite{Spr1}--\cite{Spr2}, and \cite{Yan}-\cite{WW}.

More formally, let us consider the set of formal power series (f.p.s.) $%
\mbox{$\mathcal{F}$} = {\Bbb R}[\![$$t$$]\!]$; the order of $f(t) \in %
\mbox{$\mathcal{F}$}$, $f(t) =\sum_{k=0}^\infty f_kt^k$ ($f_k\in {\Bbb R}$),
is the minimal number $r\in\mbox{$\mathbb{N}$}$ such that $f_r \neq 0$; $%
\mbox{$\mathcal{F}$}_r$ is the set of formal power series of order $r$. It
is known that $\mbox{$\mathcal{F}$}_0$ is the set of invertible f.p.s.
and $\mbox{$\mathcal{F}$}_1$ is the set of compositionally invertible 
f.p.s., that is, the f.p.s. $f(t) $ for which the compositional inverse $%
\bar f(t) $ exists such that $f(\bar f(t) ) =\bar f(f(t) ) = t$. Let $d(t)
\in \mbox{$\mathcal{F}$}_0$ and $h(t) \in \mbox{$\mathcal{F}$}_1$; the pair $%
(d(t) ,\,h(t) )$ defines the (proper) Riordan array $%
D=(d_{n,k})_{n,k\in \mbox{\scriptsize${\mathbb{N}}$}}=(d(t), h(t))$ having 
\begin{equation}  \label{Radef}
d_{n,k} = [t^n]d(t) h(t) ^k
\end{equation}
or, in other words, having $d(t) h(t)^k$ as the generating function whose
coefficients make-up the entries of column $k$.

It is immediately to be known that the usual row-by-column product of two Riordan
arrays is also a Riordan array: 
\begin{equation}  \label{Proddef}
(d_1(t) ,\,h_1(t) ) \ast (d_2(t) ,\,h_2(t) ) = (d_1(t) d_2(h_1(t)
),\,h_2(h_1(t) )),
\end{equation}
where the fundamental theorem of Riordan arrays

\begin{equation}  \label{FTR}
(d(t),\, h(t)) f(t)=d(t)f(h(t))
\end{equation}
is applied to obtain \eqref{Proddef}. The Riordan array $I = (1,\,t)$ is everywhere 0 except that it
contains all $1$'s on the main diagonal; it can be easily proved that $I$
acts as an identity for this product, that is, $(1,\,t) \ast (d(t) ,\,h(t) )
= (d(t) ,\,h(t) ) \ast (1,t) = (d(t) ,\,h(t) )$. From these facts, we deduce
a formula for the inverse Riordan array: 
\begin{equation}  \label{Invdef}
(d(t) ,\,h(t) )^{-1} = \left( \frac{1}{d({\bar h}(t) )},\,{\bar h }(t)
\right)
\end{equation}
where ${\bar h }(t) $ is the compositional inverse of $h(t) $. In this way,
the set ${\cal R}$ of proper Riordan arrays forms a group.

From \cite{Rog}, an infinite lower triangular array $[d_{n,k}]_{n,k\in{\bN}}=(d(t), h(t))$ is a Riordan array if and only if a sequence $A=(a_0\not= 0, a_1, a_2,\ldots)$ exists such that for every $n,k\in{\bN}$ there holds 
\be\label{eq:1.1}
d_{n+1,k+1} =a_0 d_{n,k}+a_1d_{n,k+1}+\cdots +a_nd_{n,n},
\ee 
which is shown in \cite{HS} to be equivalent to 
\be\label{eq:1.2}
h(t)=tA(h(t)).
\ee
Here, $A(t)$ is the generating function of the $A$-sequence. In \cite{HS, MRSV} it is also shown that a unique sequence $Z=(z_0, z_1,z_2,\ldots)$ exists such that every element in column $0$ can be expressed as the linear combination 
\be\label{eq:1.3}
d_{n+1,0}=z_0 d_{n,0}+z_1d_{n,1}+\cdots +z_n d_{n,n},
\ee
or equivalently,
\be\label{eq:1.4}
d(t)=\frac{d_{0,0}}{1-tZ(h(t))}.
\ee
We call sequences $A$ and $Z$ the $A$-(characterization) sequence and $Z-$(characterization) sequence of the Riordan array $(d(t),h(t))$, respectively, and the generating functions of called $A$-sequence and $Z-$sequence the $A$ (generating) function and $Z$ (generating) function, respectively.

In next section, we present a unified approach to construct a class of self-inverse matrices and their application for the construction of dual number sequences and dual polynomial sequences. In Section $3$ we will discuss the algebraic structure of dual number sequences with respect to the dual relationships established in Section $2$. Some identities of self-dual number sequences are found accordingly. In Section $4$, more identities of dual number sequences will be constructed by using the dual relationships.

\section{Construction of self-inverse matrices and the corresponding self-duals}
It is calear that a Riordan array $(d(t), h(t))$ and its inverse 

\[
(d(t), h(t))^{-1}=\left( \frac{1}{d(\bar h(t))}, \bar h(t)\right), \quad \text{where}\quad \bar h(h(t))=h(\bar h(t))=t,
\]
is a pair of inverse matrices. If $(d(t), h(t))^{-1}=(d(t), h(t))$, i.e., $(d(t), h(t))$ is an involution, i.e., it has an order of $2$, 
then $(d(t), h(t))$ is a self-inverse matrix. Here, if $g$ is an element of a group $G$, then the smallest positive integer 
$n$ such that $g^n = e$, the identity of the group, if it exists, is called the order of $g$. If there is no such
integer, then $g$ is said to have infinite order. It is well-known that (see Shapiro \cite{Sha1}) if
we restrict all entries of a Riordan array to be integers, then any element of finite order in the
Riordan group must have order 1 or 2, and each element of order 2 generates a subgroup of order 2.
Sprugnoli and one of the author find the characterization of Riordan arrays of order 2 in \cite{HS}.

In combinatorial situations, a Riordan array often has nonnegative integer entries and hence it can not have order $2$. Therefore, we consider (see definition, for example, in \cite{CK}) an element $R\in{\cR}$ to have pseudo-order $2$, i.e., $RM$ has order $2$, where $M=(1,-t)$. Those $R$ are called pseudo-Riordan involutions or briefly pseudo-involutions (see Cameron and Nkwanta \cite{CN} and \cite{Sha1}). We now present some sufficient and necessary conditions to identify pseudo-involutions. 
 
 \begin{theorem}\label{thm:-1-0}
 Let $(d(t), h(t))$ be a pseudo-involution, and let $A(t)$ and $Z(t)$ be the generating functions of the $A$-sequence and $Z$-sequence of $(d(t), h(t))$, i.e., $A$-function and $Z-$function. Then the following statements are equivalent to the statement that the Riordan array $(d(t), h(t))$ is a pseudo-involution: 
 
 (1) $\pm (d(t), h(t))(1,-t)=(\pm d(t), -h(t))$ are involutions.
 
 (2) $(1,-t) (d(t), h(t))(1,-t)=(d(t), h(t))^{-1}$, the inverse of $(d(t), h(t))$.
 
 (3) $\pm (1,-t)(d(t), h(t))=(\pm d(-t), h(-t))$ are involutions.
 
 (4) $A(t)=\frac{-t}{h(-t)}$ and $Z(t)=\frac{d(-t)-1}{h(-t)}$. 
\end{theorem}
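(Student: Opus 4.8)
The plan is to establish the chain of equivalences by exploiting the group structure of the Riordan group and the multiplicative behavior of the $A$- and $Z$-sequences under composition with $M=(1,-t)$. First I would unwind definitions: $(d(t),h(t))$ is a pseudo-involution means $R M$ has order $2$, i.e. $(RM)^2 = I$, equivalently $RMRM = I$, equivalently $MRM = R^{-1}$. That last reformulation is precisely statement (2), so the equivalence of the pseudo-involution hypothesis with (2) is immediate once I compute $M(d(t),h(t))M$ using the product rule \eqref{Proddef}: since $M=(1,-t)$, a direct substitution gives $(1,-t)(d(t),h(t))(1,-t) = (d(-t), -h(-(-t)))$-type computation, which I would carry out carefully to land on the stated form. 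For (1), note $(RM)^2=I$ is the same as saying $RM$ is an involution; and $-RM = (-d(t),-h(t))$ differs from $RM$ only by the central sign, and squaring kills the sign, so $(-RM)^2 = (RM)^2$, giving both $\pm(d(t),h(t))(1,-t)$ are involutions simultaneously. Statement (3) is the ``left-handed'' version: $MR$ has order $2$ iff $(MR)^2=I$ iff $MRMR=I$; conjugating or using $MRM=R^{-1}$ shows $MR$ and $RM$ are conjugate (indeed $M(RM)M^{-1} = MR$ since $M^{-1}=M$), hence one has order $2$ iff the other does, and again the $\pm$ sign is immaterial. Computing $(1,-t)(d(t),h(t))$ via \eqref{Proddef} yields $(d(-t), h(-t))$, matching the claimed form.

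The substantive part is (4), the translation into $A$- and $Z$-functions. Here I would use \eqref{eq:1.2}, $h(t)=tA(h(t))$, and \eqref{eq:1.4}, $d(t) = d_{0,0}/(1 - tZ(h(t)))$ (with $d_{0,0}=1$ for a pseudo-involution, which I'd note follows from $(RM)^2=I$ forcing the $(0,0)$ entry to be $1$). The cleanest route is to start from statement (1): $(-d(t),-h(t))$ is an involution, so applying \eqref{Invdef} to $(-d(t),-h(t))$ and setting it equal to itself gives $-h(t)$ is its own compositional inverse, i.e. $-h(-h(t)) = t$, and $-d(t)\cdot(-d(\overline{-h}(t)))=1$ where $\overline{-h}(t) = -h(t)$. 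The first relation $-h(-h(t))=t$ can be fed into the $A$-sequence functional equation: writing $u = h(-t)$ so that... actually the slick manipulation is to substitute $t \mapsto -h(t)$ appropriately in $h(s) = sA(h(s))$ and use $-h(-h(t))=t$ to solve for $A$ in terms of $h(-t)$, producing $A(t) = -t/h(-t)$. Similarly the determinant-type relation $d(t)\,d(-h(t)) = 1$ combined with \eqref{eq:1.4} yields, after substituting and using the $h$-relation, the formula $Z(t) = (d(-t)-1)/h(-t)$.

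I expect the main obstacle to be bookkeeping in part (4): keeping straight which argument is being substituted where, and correctly propagating the sign changes and compositional inverses through the functional equations \eqref{eq:1.2} and \eqref{eq:1.4}. In particular, one must be careful that the $A$-function and $Z$-function are genuinely \emph{characterizing}, so that verifying the displayed formulas for $A$ and $Z$ is not just necessary but sufficient for $(d(t),h(t))$ to be a pseudo-involution --- this requires invoking the uniqueness of the $A$- and $Z$-sequences from \cite{HS, MRSV, Rog} to run the argument backwards. A secondary technical point is justifying $d_{0,0}=1$: from $(RM)^2 = I$ one reads off the $(0,0)$ entries, $d_{0,0}^2 = 1$, and since $d(t)\in\mathcal{F}_0$ with the conventional normalization (and the pseudo-involution condition is stated for proper Riordan arrays), $d_{0,0}=1$.

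The order in which I would carry this out: (i) record $M(d(t),h(t))M$ and $(1,-t)(d(t),h(t))$ in closed form via \eqref{Proddef}; (ii) show pseudo-involution $\Leftrightarrow$ (2) by the $MRM=R^{-1}$ reformulation; (iii) deduce (1) and (3) by the sign-insensitivity of squaring and the conjugacy $M(RM)M = MR$; (iv) from (1) extract the two scalar functional identities $-h(-h(t))=t$ and $d(t)d(-h(t))=1$; (v) combine these with \eqref{eq:1.2} and \eqref{eq:1.4} to derive the formulas in (4), then invoke uniqueness of $A,Z$ for the converse. Steps (i)--(iii) are short; step (v) is where the care is needed.
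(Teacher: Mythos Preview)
Your treatment of (1)--(3) is essentially the paper's: the paper also argues pseudo-involution $\Rightarrow$ (1) $\Rightarrow$ (2) $\Rightarrow$ (3) $\Rightarrow$ (1) by the same group-theoretic rearrangements (associativity of $RMRM=I$, conjugacy via $M=M^{-1}$, and sign-insensitivity of squaring).

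For (4) your route is genuinely different. The paper does \emph{not} extract the scalar identities $-h(-h(t))=t$ and $d(t)\,d(-h(t))=1$ and feed them into \eqref{eq:1.2}--\eqref{eq:1.4}. Instead it quotes three results from \cite{HS}: Theorem~3.3 (the $A$-function of a product $(d_1,h_1)(d_2,h_2)$ is $A_2(t)A_1(t/A_2(t))$), Theorem~3.4 (the analogous rule for $Z$), and Theorem~4.3 (closed forms for the $A$- and $Z$-functions of an \emph{involution}). Applying these to $(d(t),h(t))\cdot(1,-t)=(d(t),-h(t))$ gives two expressions for each of $A_3,Z_3$, and comparing them yields $A(t)=-t/h(-t)$ and $Z(t)=(d(-t)-1)/h(-t)$. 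Your approach is more self-contained --- it needs nothing beyond the defining equations \eqref{eq:1.2}, \eqref{eq:1.4} and the two scalar identities --- while the paper's is shorter on the page but leans on external machinery. One small point: your phrase ``invoke uniqueness of $A,Z$'' for the converse is slightly off; what you actually need (and what your computations would give when reversed) is that the displayed formulas for $A$ and $Z$, plugged back into \eqref{eq:1.2} and \eqref{eq:1.4}, force $-h(-h(t))=t$ and $d(t)d(-h(t))=1$, whence $(d(t),-h(t))^2=I$. That is ``running the argument backwards'' in the literal sense, not an appeal to uniqueness.
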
 

 \begin{proof}
 Let $(d(t), h(t))$ be a pseudo-involution. Then $(d(t), h(t))(1,-t)=(d(t), -h(t))$ is an involution, i.e., 
 
 \[
 (d(t), -h(t))(d(t), -h(t))=(d(t) d(-h(t)), -h(-h(t))=(1,t).
 \]
 Additionally, $-(d(t), h(t))(1,-t)=(-d(t), -h(t))$ satisfies 
 
 \[
 (-d(t), -h(t))(-d(t), -h(t))=(d(t), -h(t))(d(t), -h(t))=(1,t),
 \]
 which implies $-(d(t), h(t))(1,-t)=(-d(t), -h(t))$ is also an involution. Hence, we have proved the assumption that $(d(t), h(t))$ is a pseudo-involution implies (1). 
 (2) follows from (1) because  
 
 \[
I= \left((d(t), h(t))(1,-t)\right)\left( (d(t), h(t))(1,-t)\right) =(d(t), h(t))\left( (1,-t)(d(t), h(t))(1,-t)\right).
\]
From the above equations, we have 

\bns
&&\left(\pm (1,-t)(d(t), h(t))\right) \left(\pm (1,-t)(d(t), h(t))\right)\\
&=&\left((1,-t)(d(t), h(t))(1,-t)\right) (d(t), h(t))=(1,t)=I.
\ens
Hence, we obtain (3) from (2). Similarly, (1) follows from (3). 

To find the $A$-function and $Z$-function of pseudo-involution $(d(t), h(t))$, we recall (see Theorem 3.3 of \cite{HS}) the $A$-sequence of 

\[
(d_{3}(t), h_{3}(t))=(d_{1}(t), h_{1}(t))(d_{2}(t), h_{2}(t))
\]
has the $A$-(generating) function

\[
A_{3}(t)=A_{2}(t)A_{1}\left( \frac{t}{A_{2}(t)}\right),
\]
where $A_{1}(t)$ and $A_{2}(t)$ are the $A$-(generating) functions of $(d_{1}(t), h_{1}(t))$ and $(d_{2}(t), h_{2}(t))$, respectively. 
Since the $A$-functions of $(d(t), h(t))$ and $(1,-t)$ are respectively $A(t)$ and $-1$, the $A$-function of $(d(t), h(t))(1,-t)$ is 

\be\label{-1-3}
A_{3}(t)=-A\left(-t\right).
\ee
On the other hand, from Theorem 4.3 of \cite{HS} we know the $A$-(generating) function of the involution $(d(t), h(t))(1,-t)=(d(t), -h(t))$ is 

\be\label{-1-4}
A_{3}(t)=\frac{t}{-h(t)}.
\ee
Comparing \eqref{-1-3} and \eqref{-1-4} we have 

\[
A(t)=\frac{-t}{h(-t)}.
\]
From Theorem 3.4 of \cite{HS}, the $Z$-(generating) function of 

\[
(d_{3}(t), h_{3}(t))=(d(t), h(t))(1,-t)=(d(t), -h(t))
\]
is 

\[
Z_{3}(t)=Z(-t)
\]
because the $Z$-functions of $(d(t), h(t))$ and $(1,-t)$ are $Z(t)$ and $0$, respectively. Since $(d_{3}(t), h_{3}(t))$ is an involution, from Theorem 4.3 of \cite{HS} we also have 

\[
Z_{3}(t)=\frac{1-d(t)}{-h(t)}.
\]
Comparing the last two equations, we immediately know 

\[
Z(t)=\frac{d(-t)-1}{h(-t)}.
\]
From the definition of $A$-sequence and $Z$-sequence, a Riordan array $(d(t), h(t))$ that possesses the above $A$-function and $Z$-function is a pseudo-involution. 
 \end{proof}
 \eop

\begin{corollary}\label{cor:-1-0}
Let nonzero $d(t)\in \mbox{$\mathcal{F}$}_0$ and $h(t) \in \mbox{$\mathcal{F}$}_1$. Then the infinite lower triangle matrix $(d(t), h(t))$ is a pseudo-involution if and only if 

\[
\bar h(t)=-h(-t)\quad \text{and} \quad d(t)=\frac{h(-t)}{h(-t)-td(-t)+t},
\]
where the denominator is assumed not being zero. 
\end{corollary}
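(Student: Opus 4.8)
The plan is to derive both stated conditions from part (4) of Theorem~\ref{thm:-1-0}, which gives the $A$-function and $Z$-function characterization of a pseudo-involution. First I would recall that the $A$-function and $Z$-function together determine the Riordan array via \eqref{eq:1.2} and \eqref{eq:1.4}. From Theorem~\ref{thm:-1-0}(4) we have $A(t)=-t/h(-t)$, so plugging $t\mapsto h(t)$ into \eqref{eq:1.2}, namely $h(t)=tA(h(t))$, yields $h(t)=t\cdot\bigl(-h(t)/h(-h(t))\bigr)$, hence $h(-h(t))=-t$. Since the compositional inverse $\bar h$ is characterized by $\bar h(h(t))=t$, and the map $t\mapsto -h(-t)$ composed with $h$ gives $-h(-h(t))=t$, we conclude $\bar h(t)=-h(-t)$. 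This is exactly the first condition, and it is essentially a restatement of Theorem~\ref{thm:-1-0}(2) at the level of the second components.

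Next I would handle the formula for $d(t)$. The cleanest route uses \eqref{eq:1.4}: $d(t)=d_{0,0}/\bigl(1-tZ(h(t))\bigr)$, where $d_{0,0}=1$ for a proper pseudo-involution (since order-$2$ behaviour forces the $(0,0)$ entry to be $\pm1$, and we normalize to $1$). By Theorem~\ref{thm:-1-0}(4), $Z(t)=\bigl(d(-t)-1\bigr)/h(-t)$, so $Z(h(t))=\bigl(d(-h(t))-1\bigr)/h(-h(t))$. Using the relation $h(-h(t))=-t$ just established, this becomes $Z(h(t))=\bigl(d(-h(t))-1\bigr)/(-t)=\bigl(1-d(-h(t))\bigr)/t$. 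Therefore $1-tZ(h(t))=d(-h(t))$, giving the compact identity $d(t)\,d(-h(t))=1$. To turn this into the displayed closed form for $d(t)$, I would substitute and solve: replacing $t$ by $-t$ in $\bar h(t)=-h(-t)$ gives $-h(t)=\bar h(-t)$; combining the involution relations on $d$ and $h$ and eliminating $d(-h(t))$ in favor of $d(-t)$ via one more substitution produces a linear equation for $d(t)$ whose solution is $d(t)=h(-t)/\bigl(h(-t)-td(-t)+t\bigr)$.

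The main obstacle is bookkeeping with the nested compositions: one must track $h(-t)$, $h(-h(t))$, $\bar h(t)$, and $d(-h(t))$ consistently and make the correct substitutions $t\mapsto -t$ and $t\mapsto \bar h(t)$ to collapse everything to an equation in $d(t)$, $d(-t)$, $h(t)$, and $h(-t)$ alone. A secondary subtlety is justifying the normalization $d_{0,0}=1$ and checking the non-vanishing hypothesis on the denominator $h(-t)-td(-t)+t$, which is needed precisely so that $1-tZ(h(t))$ is an invertible formal power series and \eqref{eq:1.4} makes sense. For the converse direction I would simply observe that the two displayed conditions force $A(t)$ and $Z(t)$ to take the forms in Theorem~\ref{thm:-1-0}(4): reversing the computation above, $\bar h(t)=-h(-t)$ gives $h(-h(t))=-t$ hence $A(t)=-t/h(-t)$, and the $d$-formula rearranges to $d(t)\,d(-h(t))=1$, which combined with \eqref{eq:1.4} back-solves to $Z(t)=\bigl(d(-t)-1\bigr)/h(-t)$; then Theorem~\ref{thm:-1-0} applies to conclude $(d(t),h(t))$ is a pseudo-involution.
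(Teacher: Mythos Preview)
Your overall strategy coincides with the paper's: both arguments route through Theorem~\ref{thm:-1-0}(4), comparing the pseudo-involution formulas for $A$ and $Z$ against the universal expressions coming from \eqref{eq:1.2} and \eqref{eq:1.4}. Your treatment of the $h$-condition is correct and essentially identical to the paper's, and your derivation of the compact identity $d(t)\,d(-h(t))=1$ is clean and valid.

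The genuine gap is the passage from $d(t)\,d(-h(t))=1$ to the displayed formula $d(t)=h(-t)/\bigl(h(-t)-t\,d(-t)+t\bigr)$, which you describe as ``one more substitution'' without carrying it out. In fact this step cannot be completed: the two conditions are \emph{not} equivalent, even assuming $\bar h(t)=-h(-t)$. A concrete witness is $(d(t),h(t))=\bigl(1/(1-t)^{2},\,t/(1-t)\bigr)$: one checks directly that $(d(t),-h(t))^{2}=(1,t)$, so this is a pseudo-involution and $d(t)\,d(-h(t))=1$ holds, yet the right-hand side of the displayed formula evaluates to $(1+t)/(1-t-t^{2})\neq 1/(1-t)^{2}$. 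The paper's own argument stumbles at exactly this point: it asserts that the $Z$-function of a general Riordan array equals $(d(t)-1)/(t\,d(t))$, whereas \eqref{eq:1.4} actually gives $Z(h(t))=(d(t)-1)/(t\,d(t))$, i.e.\ $Z(t)=\bigl(d(\bar h(t))-1\bigr)/\bigl(\bar h(t)\,d(\bar h(t))\bigr)$. Using this correct expression and equating it with $Z(t)=(d(-t)-1)/h(-t)$ from Theorem~\ref{thm:-1-0}(4) yields, after the substitution $\bar h(t)=-h(-t)$, precisely your identity $d(t)\,d(-h(t))=1$ rather than the stated rational formula. So your instinct about the correct $d$-condition is sound; the obstruction is that the corollary's displayed $d$-formula appears to be erroneous, and no substitution will bridge the two.
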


\begin{proof}
$\bar h(t)=-h(-t)$ if and only if 

\[
\frac{t}{\bar h(t)}=\frac{-t}{h(-t)},
\]
or equivalently, there exists the function $A(t)$ such that 

\[
A(t)=\frac{t}{\bar h(t)} \quad \text{and}\quad A(t)=\frac{-t}{h(-t)},
\]
which implies that $(d(t), h(t))$ has an $A$-function (see, for example, \cite{HS}) satisfying (4) of Theorem \ref{thm:-1-0}. 

$d(t)=h(-t)/(h(-t)-td(-t)+t)$ if and only if 

\[
\frac{d(-t)-1}{h(-t)}=\frac{d(t)-1}{td(t)},
\]
or equivalently, there exists the function $Z(t)$ such that 

\[
Z(t)=\frac{d(t)-1}{td(t)} \quad \text{and}\quad Z(t)=\frac{d(-t)-1}{h(-t)},
\]
which implies that $(d(t), h(t))$ has a $Z$-function (see, for example, \cite{HS}) which satisfies (4) of Theorem \ref{thm:-1-0}.  Combining all above statements together we 
know $(d(t), h(t))$ is a pseudo-involution, completing the proof of Corollary \ref{cor:-1-0}.
\end{proof}
\eop

Corollary \ref{cor:-1-0} also presents an algorithm to find a pseudo-involution. Generally one can accomplish this by carrying through the procedure demonstrated by the following example as suggested by the corollary. For instance, it is clear that $h(t)=t/(1-t)$ ( or $t/(1+t)$) has the compositional inverse  $\bar h(t)=t/(1+t)$ (or $t/(1-t)$) and satisfies $\bar h(t)=-h(-t)$. From Corollary \ref{cor:-1-0}, we may also find that $d(t)=1/(1-t)$ (or $1/(1+t)$) satisfies $d(t)=h(-t)/(h(-t)-td(-t)+t)$. Therefore $(1/(1-t), t/(1-t))$ ($(1/(1+t), t/(1+t))$) is a pseudo-involution. 

\noindent{\bf Remark} It can be seen that $(d(t), h(t))$ is a pseudo-involution if and only if there exist $A$-function and $Z$-function satisfying four relationships 

\[
A(t)=\frac{t}{\bar h(t)},\,\,  A(t)=\frac{-t}{h(-t)}, \,\, Z(t)=\frac{d(t)-1}{td(t)}, \,\, \text{and}\,\, Z(t)=\frac{d(-t)-1}{h(-t)},
\]
which not only give the formulas shown in Corollary \ref{cor:-1-1} but also the following useful formula in the construction of pseudo-involutions:

\[
h(t)=\frac{tZ(t)}{Z(-t)(1-tZ(t))}\quad \text{and}\quad d(t)=\frac{h(t)Z(-t)}{tZ(t)}.
\]
The proof is straightforward from the above four relationships and is omitted.

From (1) and (3) of Theorem \ref{thm:-1-0} we obtain the following result. 

\begin{corollary}\label{cor:-1-1}
$(1/(1-t), t/(1-t))=\left( \binom{n}{k}\right)_{n,k\geq 0}$ and $(1/(1+t), t/(1+t))=\left( \binom{n}{k}(-1)^{n-k}\right)_{n,k\geq 0}$ are pseudo-involutions. Hence, from (1) and (3) of Theorem \ref{thm:-1-0}, we generate the following four Riordan involutions, denoted by $R_{1}$, $R_{2}$, $R_{3}$, and $R_{4}$, respectively.  

\bn\label{-1-5}
R_{1}&=&\left( \frac{1}{1-t}, \frac{t}{1-t}\right)(1,-t)=(1,-t)\left( \frac{1}{1+t}, \frac{t}{1+t}\right)\nonumber\\
&=&\left( \frac{1}{1-t}, \frac{-t}{1-t}\right)=\left( \binom{n}{k}(-1)^{k}\right)_{n,k\geq 0},\nonumber\\
R_{2}&=&-\left( \frac{1}{1-t}, \frac{t}{1-t}\right)(1,-t)=-(1,-t)\left( \frac{1}{1+t}, \frac{t}{1+t}\right)\nonumber\\
&=&\left( \frac{1}{t-1}, \frac{t}{t-1}\right)=\left( \binom{n}{k}(-1)^{k+1}\right)_{n,k\geq 0},\nonumber\\
R_{3}&=&(1,-t)\left( \frac{1}{1-t}, \frac{t}{1-t}\right)=\left( \frac{1}{1+t}, \frac{t}{1+t}\right)(1,-t)\nonumber\\
&=&\left( \frac{1}{1+t}, \frac{-t}{1+t}\right)=\left( \binom{n}{k}(-1)^{n}\right)_{n,k\geq 0},\nonumber\\
R_{4}&=&-(1,-t)\left( \frac{1}{1-t}, \frac{t}{1-t}\right)=-\left( \frac{1}{1+t}, \frac{t}{1+t}\right)(1,-t)\nonumber\\
&=&\left( -\frac{1}{1+t}, \frac{-t}{1+t}\right)=\left( \binom{n}{k}(-1)^{n+1}\right)_{n,k\geq 0}.
\en
\end{corollary}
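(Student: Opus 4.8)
The plan is to verify the two named Riordan arrays are pseudo-involutions by the explicit criterion of Corollary~\ref{cor:-1-0}, and then simply apply parts (1) and (3) of Theorem~\ref{thm:-1-0} to read off $R_1,\ldots,R_4$. For the first array, take $d(t)=1/(1-t)$ and $h(t)=t/(1-t)$. One computes the compositional inverse $\bar h(t)=t/(1+t)$, and checks $-h(-t)=-\bigl(-t/(1+t)\bigr)=t/(1+t)=\bar h(t)$, which is the first condition of Corollary~\ref{cor:-1-0}. For the second condition I would substitute $d(-t)=1/(1+t)$ and $h(-t)=t/(1+t)$ into $h(-t)/\bigl(h(-t)-td(-t)+t\bigr)$ and simplify: the denominator becomes $t/(1+t)-t/(1+t)+t=t$, so the quotient is $\bigl(t/(1+t)\bigr)/t=1/(1+t)$. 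Wait — that gives $1/(1+t)$, not $d(t)=1/(1-t)$, so I would instead verify via the $A$- and $Z$-function formulas of Theorem~\ref{thm:-1-0}(4), or equivalently observe directly from the already-established fact (stated right after Corollary~\ref{cor:-1-0}) that $(1/(1-t),t/(1-t))$ is a pseudo-involution. For the second array, $(1/(1+t),t/(1+t))$, the symmetric computation applies with the roles of $+$ and $-$ swapped.

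Actually the cleanest route, and the one I would write up, is to cite the worked example in the paragraph immediately following the proof of Corollary~\ref{cor:-1-0}: there it is shown that $(1/(1-t),t/(1-t))$ and $(1/(1+t),t/(1+t))$ are pseudo-involutions. The only remaining task for this corollary is the identification of these Riordan arrays as the binomial matrices. For $(1/(1-t),t/(1-t))$ I would use the defining relation $d_{n,k}=[t^n]\,d(t)h(t)^k=[t^n]\,\frac{1}{1-t}\cdot\frac{t^k}{(1-t)^k}=[t^{n-k}]\,(1-t)^{-(k+1)}=\binom{n}{k}$. For $(1/(1+t),t/(1+t))$ the same computation with $1+t$ in place of $1-t$ gives $d_{n,k}=[t^{n-k}]\,(1+t)^{-(k+1)}=\binom{-(k+1)}{n-k}=\binom{n}{k}(-1)^{n-k}$.

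Then I would apply Theorem~\ref{thm:-1-0}. Part (1) says that for a pseudo-involution $(d(t),h(t))$, both $\pm(d(t),h(t))(1,-t)=(\pm d(t),-h(t))$ are involutions; part (3) says both $\pm(1,-t)(d(t),h(t))=(\pm d(-t),h(-t))$ are involutions. Feeding in $(d,h)=(1/(1-t),t/(1-t))$ and using the Riordan product rule \eqref{Proddef} (with $(1,-t)$ on the right substituting $t\mapsto -t$ into $h$, and $(1,-t)$ on the left substituting $t\mapsto -t$ into both $d$ and $h$), I get $R_1=(1/(1-t),-t/(1-t))$, $R_2=(-1/(1-t),-t/(1-t))=(1/(t-1),t/(t-1))$, $R_3=(1/(1+t),-t/(1+t))$, $R_4=(-1/(1+t),-t/(1+t))$. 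The corresponding matrix entries follow from $[t^n]\,d(t)h(t)^k$ exactly as above, now with a sign: e.g.\ $R_1$ has $(n,k)$ entry $[t^{n-k}]\,(-1)^k(1-t)^{-(k+1)}=\binom{n}{k}(-1)^k$, and similarly $R_3$ has entry $[t^{n-k}]\,(-1)^k(1+t)^{-(k+1)}=\binom{n}{k}(-1)^k(-1)^{n-k}=\binom{n}{k}(-1)^n$; $R_2$ and $R_4$ differ from $R_1$ and $R_3$ by an overall factor $-1$, giving the exponents $k+1$ and $n+1$. One should also note the two alternative factorizations displayed in \eqref{-1-5} — e.g.\ $(1/(1-t),t/(1-t))(1,-t)=(1,-t)(1/(1+t),t/(1+t))$ — which hold because $(1,-t)^{-1}=(1,-t)$ and $(1/(1-t),t/(1-t))^{-1}=(1/(1+t),t/(1+t))$, together with Theorem~\ref{thm:-1-0}(2).

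The main obstacle is bookkeeping rather than mathematics: one must be careful with the non-commutativity of the Riordan product and with the direction in which $(1,-t)$ acts (it negates $t$ in the right factor when on the right, and negates $t$ everywhere in the left factor when on the left), and then with extracting the binomial coefficients with the correct sign patterns $(-1)^k$, $(-1)^{k+1}$, $(-1)^n$, $(-1)^{n+1}$. No step is genuinely hard, so I would keep the write-up short, leaning on the already-proved pseudo-involution property of the two binomial arrays and on Theorem~\ref{thm:-1-0}.
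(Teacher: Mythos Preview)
Your final approach—citing the worked example immediately after Corollary~\ref{cor:-1-0} for the pseudo-involution property, and then reading off $R_1,\ldots,R_4$ from Theorem~\ref{thm:-1-0}(1),(3)—is exactly how the paper proceeds (it gives no separate proof for this corollary beyond the lead-in ``From (1) and (3) of Theorem~\ref{thm:-1-0} we obtain the following result''). Your abandoned direct verification via Corollary~\ref{cor:-1-0} only failed because of a sign slip: $h(-t)=-t/(1+t)$, not $t/(1+t)$, and with that correction the denominator $h(-t)-td(-t)+t$ equals $t(t-1)/(1+t)$, so the quotient is $1/(1-t)=d(t)$ as required.
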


\begin{theorem}\label{thm:-1-2}
Let $R_{i}$ ($i=1,2,3,4$) be the Riordan arrays shown in Corollary \ref{cor:-1-1}. Then there hold the following four dual relationships, denoted by $D_{1}$, $D_{2}$, $D_{3}$, and $D_{4}$, respectively.

\bns
&& D_{1}: a_{n}=\sum^{n}_{k=0}\binom{n}{k}(-1)^{k}a_{k},\\
&& D_{2}: a_{n}=\sum^{n}_{k=0}\binom{n}{k}(-1)^{k+1}a_{k},\\
&& D_{3}: a_{n}=\sum^{n}_{k=0}\binom{n}{k}(-1)^{n}a_{k},\\
&& D_{4}: a_{n}=\sum^{n}_{k=0}\binom{n}{k}(-1)^{n+1}a_{k}.\\
\ens
Furthermore, $\{ (-1)^{n}B_{n}\}_{n\geq 0}$ and $\{ B_{n}\}_{n\geq 0}$ are self-dual sequences with respect to $D_{1}$ and $D_{3}$, respectively. $\left\{ E_{n}\left( \frac{1}{2}\right)-\frac{1}{2^{n}}\right\}$ and $\left\{ (-1)^{n}\left( E_{n}\left( \frac{1}{2}\right)-\frac{1}{2^{n}}\right)\right\}$ are self-dual sequences with respect to $D_{2}$ and $D_{4}$, respectively.
\end{theorem}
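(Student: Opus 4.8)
The plan is to split the statement into two parts: first, that $D_1,\dots,D_4$ really are dual relationships, which is immediate; and second, the four concrete self-duality assertions, which I would verify by passing to exponential generating functions. The first part requires no work beyond Corollary \ref{cor:-1-1}: each $R_i$ is a Riordan involution, so as a matrix $R_i=R_i^{-1}$, i.e. its entries $d_{n,k}$ satisfy $\sum_j d_{n,j}d_{j,k}=\delta_{n,k}$; thus $(d_{n,k})$ is self-inverse and the scheme \eqref{-1-1} applies with $\bar d_{n,k}=d_{n,k}$. Reading off the entries $\binom nk(-1)^k$, $\binom nk(-1)^{k+1}$, $\binom nk(-1)^n$, $\binom nk(-1)^{n+1}$ from \eqref{-1-5} produces exactly $D_1,D_2,D_3,D_4$.

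For the self-duality claims the key tool is the elementary binomial-convolution rule: if $a(t)=\sum_{n\ge0}a_nt^n/n!$ is the EGF of $\{a_n\}$, then $\sum_{k=0}^n\binom nk(-1)^ka_k$ has EGF $e^t a(-t)$ and $\sum_{k=0}^n\binom nka_k$ has EGF $e^t a(t)$. Hence $\{a_n\}$ is self-dual with respect to $D_1$ iff $e^t a(-t)=a(t)$, with respect to $D_2$ iff $e^t a(-t)=-a(t)$, with respect to $D_3$ iff $e^t a(t)=a(-t)$, and with respect to $D_4$ iff $e^t a(t)=-a(-t)$. Moreover I would note that $\{a_n\}$ is self-dual for $D_3$ (resp. $D_4$) precisely when $\{(-1)^n a_n\}$ is self-dual for $D_1$ (resp. $D_2$) — both reduce to the single identity $(-1)^n a_n=\sum_k\binom nk a_k$ (resp. $(-1)^n a_n=-\sum_k\binom nk a_k$) — which mirrors the passage from $R_1$ to $R_3$ and from $R_2$ to $R_4$ in \eqref{-1-5}; this lets me reduce the four cases to two.

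For the Bernoulli cases, the EGF of $\{(-1)^nB_n\}$ is $\sum(-1)^nB_nt^n/n!=-t/(e^{-t}-1)=te^t/(e^t-1)$, whose image under $t\mapsto-t$ is $t/(e^t-1)$; multiplying by $e^t$ returns $te^t/(e^t-1)$, so $e^t a(-t)=a(t)$ and $D_1$ holds. This is the classical reflection formula $B_n(1)=(-1)^nB_n$, and by the reduction above (or directly, since $\{B_n\}$ has EGF $t/(e^t-1)$ and $e^t\cdot t/(e^t-1)=te^t/(e^t-1)=a(-t)$) the same identity yields $D_3$ for $\{B_n\}$. For the Euler cases I would set $a_n=E_n(1/2)-1/2^n$ and compute, from \eqref{0-2},
\[
a(t)=\frac{2e^{t/2}}{e^t+1}-e^{t/2}=e^{t/2}\,\frac{1-e^t}{1+e^t},
\]
so $a(-t)=e^{-t/2}(e^t-1)/(e^t+1)$ and $e^t a(-t)=e^{t/2}(e^t-1)/(e^t+1)=-a(t)$, which is exactly $D_2$; then $D_4$ for $\{(-1)^n(E_n(1/2)-1/2^n)\}$ follows from the twisting reduction.

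The generating-function bookkeeping is routine; the only step requiring foresight is the choice of the corrected sequence $E_n(1/2)-1/2^n$ (and its sign twist) rather than $E_n$ or $E_n(1/2)$ alone, since self-duality with respect to $D_2$ demands that the EGF be sent to its negative under $t\mapsto -t$ followed by multiplication by $e^t$, and it is precisely the subtraction of $e^{t/2}$ that makes $e^{t/2}(1-e^t)/(1+e^t)$ have this property. I expect confirming this Euler-polynomial identity to be the main (though still short) obstacle; the Bernoulli part is just the well-known reflection formula in disguise.
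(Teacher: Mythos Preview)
Your proof is correct, but it takes a different route from the paper's. The paper verifies the four self-duality claims by direct finite-sum manipulation: for the Bernoulli cases it quotes the classical identity $\sum_{k=0}^n\binom nk B_k=B_n(1)=(-1)^nB_n$, and for the Euler cases it expands $\sum_{k=0}^n\binom nk(-1)^{k+1}(E_k(1/2)-1/2^k)$ using the addition formula \eqref{0-5} together with the reflection law $(-1)^nE_n(-x)=-E_n(x)+2x^n$, handling $D_2$ and $D_4$ by two parallel computations. You instead pass to exponential generating functions and reduce everything to checking the functional equation $e^t a(-t)=\pm a(t)$, which you verify cleanly from the closed forms $te^t/(e^t-1)$ and $e^{t/2}(1-e^t)/(1+e^t)$; your twisting observation ($D_1\leftrightarrow D_3$, $D_2\leftrightarrow D_4$ under $a_n\mapsto(-1)^na_n$) further halves the work. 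Your approach is essentially the criterion that the paper only states later as Theorem~\ref{thm:1.5} (self-duality for $D_i$ is equivalent to $a^\ast(x)e^{\mp x/2}$ being even or odd), so you are in effect anticipating and applying that result. The paper's argument has the virtue of staying at the level of polynomial identities and citing standard references; yours is more uniform and explains \emph{why} the particular correction $-1/2^n$ is the right one, since it is exactly what makes the EGF odd after the $e^{t/2}$ factor.
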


\begin{proof}
Since (see, for example, Apostol \cite{Apo} and  Milton and Stegun \cite{MS}) 

\be
\sum^{n}_{k=0}\binom{n}{k} B_{k}=B_{n}(1)=(-1)^{n}B_{n}, 
\ee
$\{ (-1)^{n}B_{n}\}_{n\geq 0}$ and $\{ B_{n}\}_{n\geq 0}$ are self-dual sequences with respect to $D_{1}$ and $D_{3}$, respectively. Similarly, from \eqref{0-5} and $(-1)^{n}E_{n}(-x)=-E_{n}(x)+2x^{n}$ (see, for example \cite{Apo, MS}) there hold  

\bns
&&\sum^{n}_{k=0}\binom{n}{k}(-1)^{k+1}\left( E_{k}\left( \frac{1}{2}\right)-\frac{1}{2^{k}}\right)\\
&=&-(-1)^{n}\sum^{n}_{k=0}\binom{n}{k}\left(- \frac{1}{2}-\frac{1}{2}\right)^{n-k} E_{k}\left( \frac{1}{2}\right)+\sum^{n}_{k=0}\binom{n}{k}(-1)^{k}\frac{1}{2^{k}}\\
&=&-(-1)^{n}E_{n}\left( -\frac{1}{2}\right) +\frac{1}{2^{n}}\\
&=&E_{n}\left( \frac{1}{2}\right)-2\frac{1}{2^{n}}+\frac{1}{2^{n}}=E_{n}\left(\frac{1}{2}\right)-\frac{1}{2^{n}}
\ens
Similarly, we have 

\bns
&&\sum^{n}_{k=0}\binom{n}{k}(-1)^{n+1}(-1)^{k}\left( E_{k}\left( \frac{1}{2}\right)-\frac{1}{2^{k}}\right)\\
&=&-\sum^{n}_{k=0}\binom{n}{k}(-1)^{n-k}E_{k}\left( \frac{1}{2}\right)+(-1)^{n}\sum^{n}_{k=0}\binom{n}{k}(-1)^{k}\frac{1}{2^{k}}\\
&=&-E_{n}\left( -\frac{1}{2}\right)+(-1)^{n}\frac{1}{2^{n}}\\
&=&(-1)^{n}\left( E_{n}\left( \frac{1}{2}\right)-\frac{1}{2^{n}}\right),
\ens
completing the proof of the theorem.  
\end{proof}
\eop

We now consider the duals of Bernoulli and Euler numbers and the corresponding duals of Bernoulli and Euler polynomials with respect to different dual relationships shown in Theorem \ref{thm:-1-2}. 

\begin{theorem}\label{thm:1.1}
Let $B^\ast_n$ be the duals of Bernoulli numbers $B_{n}$ with respect to $R_{1}$, and let $B^\ast_n(x)$ be the corresponding dual Bernoulli polynomials defined by \eqref{0-6}. 
Then there hold

\be\label{0-10}
B^\ast_n(x)=(-1)^nB_n(-x-1)
\ee
for all $n\geq 0$, 
and 

\be\label{0-12}
B^\ast_n=(-1)^n B_n+n.
\ee
\end{theorem}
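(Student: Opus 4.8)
The plan is to exploit the explicit dual relationship $D_1$ together with the known reflection formula for Bernoulli polynomials. First I would compute $B^\ast_n(x)$ directly from its defining relation \eqref{0-6}, substituting the defining formula \eqref{0-8} for $B^\ast_k$. This gives
\[
B^\ast_n(x)=\sum^n_{k=0}\binom nk x^{n-k}\sum^k_{j=0}\binom kj(-1)^j B_j,
\]
and after interchanging the order of summation and using the subset-of-a-subset identity $\binom nk\binom kj=\binom nj\binom{n-j}{k-j}$, the inner sum over $k$ collapses (via the binomial theorem applied to $\sum_{k\ge j}\binom{n-j}{k-j}x^{n-k}=(x+1)^{n-j}$), leaving
\[
B^\ast_n(x)=\sum^n_{j=0}\binom nj(-1)^j(x+1)^{n-j}B_j.
\]
Then I would recognize the right-hand side: pulling out $(-1)^n$ and writing $(-1)^j B_j(\cdot)$ with argument chosen so that $\sum_j\binom nj B_j y^{n-j}=B_n(y)$ applies. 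Specifically $B^\ast_n(x)=(-1)^n\sum_j\binom nj B_j(-(x+1))^{n-j}=(-1)^nB_n(-x-1)$, which is \eqref{0-10}.

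For \eqref{0-12}, I would set $x=0$ in \eqref{0-10} to get $B^\ast_n=(-1)^nB_n(-1)$, and then invoke the standard identity $B_n(x+1)=B_n(x)+nx^{n-1}$ (equivalently the reflection formula $B_n(1-x)=(-1)^nB_n(x)$ already used in the proof of Theorem \ref{thm:-1-2}). From $B_n(1-x)=(-1)^nB_n(x)$ with $x=2$ one gets $B_n(-1)=(-1)^nB_n(2)$, and $B_n(2)=B_n(1)+n\cdot 1^{n-1}=(-1)^nB_n+n$ by combining $B_n(1)=(-1)^nB_n$ with the difference equation. Hence $B^\ast_n=(-1)^n\cdot(-1)^n\big((-1)^nB_n+n\big)=(-1)^nB_n+n$. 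Alternatively, and perhaps more cleanly, I would derive \eqref{0-12} straight from \eqref{0-8}: $B^\ast_n=\sum_k\binom nk(-1)^kB_k$, split off... actually the slicker route is $B^\ast_n=(-1)^nB_n(-1)$ and $B_n(-1)=B_n(0)-n(-1)^{n-1}\cdot(-1)$... so I would simply use $B_n(-1)=B_n(1)-n(-1)^{n-1}$ from the difference equation $B_n(x)-B_n(x-1)=n(x-1)^{n-1}$ at $x=0$, giving $B_n(-1)=B_n(1)+n(-1)^n=(-1)^nB_n+n(-1)^n$, so $B^\ast_n=(-1)^n B_n(-1)=B_n+n$... wait, this must be reconciled with the claimed sign, so I would be careful to track $(-1)^n$ factors against the parity of $B_n$ (recall $B_n=0$ for odd $n\ge 3$, and $B_1=-1/2$), checking small cases $n=0,1,2$ to fix the signs.

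The main obstacle I anticipate is not conceptual but bookkeeping: getting every sign and every shift of argument exactly right, since there are competing conventions for $B_n$ (the value $B_1=\pm 1/2$) and several near-identical reflection/difference identities in play. I would pin down the conventions by testing $n=0$ ($B^\ast_0=1$, $B_0(-1)=1$, consistent with $(-1)^0B_0+0=1$) and $n=1$ ($B^\ast_1=\sum_k\binom 1k(-1)^kB_k=B_0-B_1=1+\tfrac12=\tfrac32$; meanwhile $(-1)^1B_1+1=\tfrac12+1=\tfrac32$, consistent), which also validates \eqref{0-10} since $(-1)^1B_1(-2)=-(-2-\tfrac12)=\tfrac52$... so I would recheck: $B_1(x)=x-\tfrac12$ gives $B_1(-2)=-\tfrac52$, and $(-1)^1B_1(-0-1)=-B_1(-1)=-(-1-\tfrac12)=\tfrac32$, matching $B^\ast_1$. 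These spot checks are the safeguard. Everything else — the summation interchange, the two applications of the binomial theorem, and the appeal to \eqref{0-4} to re-fold the sum into $B_n$ — is routine.
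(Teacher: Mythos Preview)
Your approach is correct and essentially identical to the paper's own proof: the same summation interchange using $\binom nk\binom kj=\binom nj\binom{n-j}{k-j}$, the same binomial-theorem collapse to $(x+1)^{n-j}$, and the same recognition of the result as $(-1)^nB_n(-x-1)$ via \eqref{0-4}. For \eqref{0-12} the paper also sets $x=0$ and then writes the short chain $(-1)^nB_n(-1)=B_n(1)+n=(-1)^nB_n+n$ (implicitly using the reflection formula $B_n(1-x)=(-1)^nB_n(x)$ with $x=2$ and the difference relation $B_n(2)=B_n(1)+n$), which is exactly the route you arrive at after your sign-checking detour; your small-case verifications are a sound way to resolve the bookkeeping, though not needed in the final write-up.
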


\begin{proof}
\eqref{0-4} and \eqref{0-6} give 

\bns
&&B^{\ast}_{n}(x)=\sum^{n}_{k=0}\binom{n}{k}(x)^{n-k}B^{\ast}_{k}\\
&=&\sum^{n}_{k=0}\binom{n}{k}(x)^{n-k}\sum^{k}_{j=0}\binom{k}{j}(-1)^{j}B_{j}\\
&=&\sum^{n}_{j=0}\binom{n}{j}(-1)^{j}B_{j}\sum^{n}_{k=j}\binom{n-j}{k-j}(x)^{n-k}\\
&=&\sum^{n}_{j=0}\binom{n}{j}(-1)^{j}B_{j}(x+1)^{n-j}\\
&=&(-1)^{n}B_{n}(-x-1).
\ens
Hence, 

\bns
&&B^{\ast}_{n}=B^{\ast}_{n}(0)=(-1)^{n}B_{n}(-1)\\
&=&B_{n}(1)+n=(-1)^{n}B_{n}(0)+n,
\ens
which implies \eqref{0-12}.
\end{proof}
\eop

\begin{corollary}\label{cor:1.2}
Let $B^\ast_n(x)$ be the duals of Bernoulli polynomials. Then their generating function is 

\bn\label{0-13}
&&\sum_{n\geq 0} B^\ast_n (x)\frac{t^n}{n!}=\sum_{n\geq 0} (-1)B_n(-x-1)\frac{t^n}{n!}\nonumber \\
&=&\frac{-t e^{(x+1)t}}{e^{-t}-1}=\frac{e^{(x+1)t}}{1+\frac{1-t-e^{-t}}{t}},
\en
\end{corollary}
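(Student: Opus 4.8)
The plan is to take the already-established identity \eqref{0-10}, namely $B^\ast_n(x)=(-1)^nB_n(-x-1)$, and simply pass to exponential generating functions. First I would multiply both sides of \eqref{0-10} by $t^n/n!$ and sum over $n\geq 0$; on the left this produces $\sum_{n\geq 0}B^\ast_n(x)t^n/n!$ by definition, and on the right I get $\sum_{n\geq 0}(-1)^nB_n(-x-1)t^n/n!=\sum_{n\geq 0}B_n(-x-1)(-t)^n/n!$. Then I would invoke the defining generating function \eqref{0-2} for Bernoulli polynomials, evaluated at argument $-x-1$ and with $t$ replaced by $-t$: this gives $\sum_{n\geq 0}B_n(-x-1)(-t)^n/n! = \dfrac{(-t)e^{(-x-1)(-t)}}{e^{-t}-1}=\dfrac{-t\,e^{(x+1)t}}{e^{-t}-1}$, which is exactly the first displayed form in \eqref{0-13}.

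Second, I would obtain the last expression in \eqref{0-13} by a purely algebraic manipulation of the denominator: write $\dfrac{-t\,e^{(x+1)t}}{e^{-t}-1} = \dfrac{e^{(x+1)t}}{(e^{-t}-1)/(-t)} = \dfrac{e^{(x+1)t}}{(1-e^{-t})/t}$, and then note $(1-e^{-t})/t = 1 + \dfrac{1-t-e^{-t}}{t}$ since $(1-e^{-t})/t - 1 = (1-e^{-t}-t)/t$. This yields the form $\dfrac{e^{(x+1)t}}{1+\frac{1-t-e^{-t}}{t}}$ as claimed. I would also remark that the coefficient labelled $(-1)B_n(-x-1)$ in the statement should read $(-1)^nB_n(-x-1)$, consistent with \eqref{0-10}; this is a harmless typo and does not affect the computation.

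There is essentially no hard part here: the corollary is an immediate transcription of Theorem~\ref{thm:1.1} into generating-function language, and the only things to be careful about are the sign bookkeeping in the substitution $t\mapsto -t$ (so that $(-1)^n$ is absorbed correctly and $e^{xt}$ picks up the right exponent) and the elementary rearrangement of the denominator. If anything deserves a sentence of justification it is the legitimacy of the termwise summation, which is immediate in the ring of formal power series since \eqref{0-2} holds there. I would keep the write-up to three or four lines, essentially just displaying the chain of equalities above.
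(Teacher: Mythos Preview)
Your proposal is correct and is precisely the intended derivation: the paper states Corollary~\ref{cor:1.2} without proof as an immediate consequence of Theorem~\ref{thm:1.1}, and your argument---substitute $t\mapsto -t$ and $x\mapsto -x-1$ in \eqref{0-2}, then rewrite the denominator---is exactly how one reads it off. Your observation that $(-1)B_n(-x-1)$ should be $(-1)^nB_n(-x-1)$ is also correct.
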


Other duals of Bernoulli and Euler numbers can be presented below.

\begin{theorem}\label{thm:1.2}
With respect to the dual relationship $D_{3}$, the duals of numbers $(-1)^{n}B_{n}$, denoted by $\left((-1)^{n}B_{n}\right)^{\ast}$, may be written as  

\be\label{0-14}
\left((-1)^{n}B_{n}\right)^{\ast}=\sum^{n}_{k=0}\binom{n}{k}(-1)^{n}(-1)^{k}B_{k}=B_{n}+(-1)^{n}n.
\ee
And the corresponding dual Bernoulli polynomials are 

\be\label{0-15}
\sum^{n}_{k=0}\binom{n}{k} \left((-1)^{k}B_{k}\right)^{\ast} x^{n-k}=B_{n}(x) -n(x-1)^{n-1}.
\ee

With respect to  the dual relationship $D_{4}$, the duals of numbers $E_{n}(1/2)-(1/2)^{n}$, denoted by $\left(E_{n}(1/2)-(1/2)^{n}\right)^{\ast}$, have the 
expressions 

\bn\label{0-16}
&&\left(E_{n}\left( \frac{1}{2}\right)-\left( \frac{1}{2}\right)^{n}\right)^{\ast}= \sum^{n}_{k=0}\binom{n}{k}(-1)^{n+1}\left(E_{k}\left( \frac{1}{2}\right)-\left( \frac{1}{2}\right)^{k}\right)
\nonumber\\
&=& (-1)^{n}\left(E_{n}\left( \frac{1}{2}\right)+\frac{3^{n}-2}{2^{n}}\right).
\en
The corresponding dual Euler polynomials are 

\be\label{0-17}
\sum^{n}_{k=0}\binom{n}{k} \left( x-\frac{1}{2}\right)^{n-k}\left(E_{k}\left( \frac{1}{2}\right)-\left( \frac{1}{2}\right)^{k}\right)^{\ast}=(-1)^{n}E_{n}(-x+1)-2(x-1)^{n}+(x-2)^{n}.
\ee

With respect to the dual relationship $D_{2}$, the duals of numbers $(-1)^{n}\left(E_{n}(1/2)-(1/2)^{n}\right)$, denoted by $\left((-1)^{n}\left(E_{n}(1/2)-(1/2)^{n}\right)\right)^{\ast}$, can  be evaluated as  

\bn\label{0-18}
&&\left((-1)^{n}\left(E_{n}\left( \frac{1}{2}\right) -\frac{1}{2^{n}}\right)\right)^{\ast}=\sum^{n}_{k=0}\binom{n}{k}(-1)^{k+1}(-1)^{k}\left(E_{k}\left( \frac{1}{2}\right) -\frac{1}{2^{k}}\right)
\nonumber\\
&=&E_{n}\left( \frac{1}{2}\right)+\frac{3^{n}-2}{2^{n}}.
\en
And the corresponding dual Bernoulli polynomials are 

\be\label{0-19}
\sum^{n}_{k=0}\binom{n}{k} \left( x-\frac{1}{2}\right)^{n-k}\left((-1)^{k}\left(E_{k}\left( \frac{1}{2}\right) -\frac{1}{2^{k}}\right)\right)^{\ast}=E_{n}(x) +(x+1)^{n}-2x^{n}.
\ee
\end{theorem}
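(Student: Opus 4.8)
The plan is to derive each of the four families of identities in Theorem \ref{thm:1.2} by the same bookkeeping device already used in the proof of Theorem \ref{thm:1.1}: interchange the order of summation in the defining double sum, collapse the inner sum by the binomial theorem, and then recognize the result as a value of a Bernoulli or Euler polynomial at a shifted argument. The only external inputs needed are the two classical reflection formulas already invoked in the paper, namely $B_n(1)=(-1)^nB_n$ together with $B_n(1-x)=(-1)^nB_n(x)$, and $(-1)^nE_n(-x)=-E_n(x)+2x^n$ (equivalently $E_n(1-x)=(-1)^nE_n(x)$), plus the binomial-expansion definitions \eqref{0-4}--\eqref{0-5} of $B_n(x)$ and $E_n(x)$.

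First I would treat the number identities \eqref{0-14}, \eqref{0-16}, \eqref{0-18}. For \eqref{0-14}, I would write $\big((-1)^nB_n\big)^\ast=(-1)^n\sum_k\binom nk(-1)^kB_k=(-1)^nB_n(1)$ by \eqref{0-4} with $x=1$, and then substitute $B_n(1)=(-1)^nB_n+\big(B_n(1)-(-1)^nB_n\big)$; but since $B_n(1)=(-1)^nB_n$ exactly, one instead needs $B_n(1)=B_n+n\,[\text{something}]$. Here I would use $B_n(1)=B_n(0)+n\cdot 0^{n-1}$? No — the correct elementary fact is $B_n(x+1)-B_n(x)=nx^{n-1}$, so $B_n(1)=B_n(0)+n\cdot(0)^{n-1}$ only gives $B_n$ for $n\ge 2$; rather I should track the constant directly: $\sum_k\binom nk(-1)^{n+k}B_k=(-1)^n B_n(1)=B_n$ for $n\ge 1$ and $=1$ for... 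In fact the clean route, mirroring Theorem \ref{thm:1.1}, is: the claimed value $B_n+(-1)^nn$ equals $(-1)^n\big((-1)^nB_n+n\big)=(-1)^nB_n^\ast$ where $B_n^\ast$ is exactly the quantity computed in \eqref{0-12}; so \eqref{0-14} is nothing but $(-1)^n$ times \eqref{0-12}, which I can cite verbatim. The same observation reduces \eqref{0-18} to a sign twist of \eqref{0-16}, so really only \eqref{0-16} requires new computation: expand $E_k(1/2)$ is not needed — instead apply \eqref{0-5}-style reasoning to $\sum_k\binom nk(-1)^{n+1}E_k(1/2)/2^k\cdot\text{(nothing)}$... concretely, $\sum_k\binom nk(-1)^{n+1}E_k(1/2)=-(-1)^n E_n$-type; the paper's Theorem \ref{thm:-1-2} proof already establishes $\sum_k\binom nk(-1)^{n+1}(-1)^k(E_k(1/2)-2^{-k})=(-1)^n(E_n(1/2)-2^{-n})$, which is the $x=1/2$ case; to get the $x=1$ information producing the $3^n$ term I would run the identical manipulation with the extra factor $(-1)^{n+1}$ but without the $(-1)^k$, using $E_n(1-x)=(-1)^nE_n(x)$ and $\sum_k\binom nk(-1)^k2^{-k}=(1-1/2)^n\cdot(-1)^?$... which yields $(1/2)^n$ after sign care and, combined with a separate geometric sum $\sum_k\binom nk(\pm1)2^{-k}$, the $(3^n-2)/2^n$ combination.

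Next I would handle the polynomial identities \eqref{0-15}, \eqref{0-17}, \eqref{0-19} by exactly the swap-of-summation trick of Theorem \ref{thm:1.1}. For \eqref{0-15}: write the left side as $\sum_k\binom nk x^{n-k}\sum_{j}\binom kj(-1)^{n-j+?}(-1)^jB_j$ — more precisely, $\big((-1)^kB_k\big)^\ast=\sum_j\binom kj(-1)^{k}(-1)^jB_j$ with respect to $D_3$, so the double sum is $\sum_j(-1)^jB_j\sum_{k\ge j}\binom nk\binom kj(-1)^k x^{n-k}$; pull $\binom nk\binom kj=\binom nj\binom{n-j}{k-j}$ out front, reindex $k=j+m$, and the inner sum becomes $\binom nj(-1)^j\sum_m\binom{n-j}{m}(-1)^m x^{n-j-m}=\binom nj(-1)^j(x-1)^{n-j}$; hence the total is $\sum_j\binom nj(-1)^{2j}B_j(x-1)^{n-j}$... wait, the signs will actually produce $\sum_j\binom nj B_j(-(x-1))^{\,}$-type or a reflected polynomial, which after applying $B_n(1-x)=(-1)^nB_n(x)$ and the difference relation $B_n(x)-B_n(x-1)=n(x-1)^{n-1}$ collapses to $B_n(x)-n(x-1)^{n-1}$. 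Identities \eqref{0-17} and \eqref{0-19} are the same computation with $E$ in place of $B$, the shift $x\mapsto x-1/2$ built into \eqref{0-5}, and the reflection $(-1)^nE_n(-y)=-E_n(y)+2y^n$ used twice (once on the $E$-part, once absorbing the $-2x^n$ and $(x\pm1)^n$ correction terms coming from the $2^{-k}$ pieces); the extra global sign in \eqref{0-17} relative to \eqref{0-19} accounts for the $(-1)^n$ in front of $E_n(-x+1)$.

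The main obstacle will be purely clerical: keeping the four interacting signs straight — the $(-1)^k$ inside the dual coefficient, the $(-1)^n$ or $(-1)^{n+1}$ of the matrix $R_i$, the $(-1)^k$ hidden in the original sequence $(-1)^kB_k$ or $(-1)^k(E_k(1/2)-2^{-k})$, and the alternating sign produced by the binomial theorem when collapsing $\sum_m\binom{n-j}{m}(\mp1)^m x^{n-j-m}=(x\mp1)^{n-j}$ — and then matching the leftover elementary terms ($n$, $n(x-1)^{n-1}$, $(3^n-2)/2^n$, $2(x-1)^n$, $(x-2)^n$, $(x+1)^n-2x^n$) against the correct reflection/difference identity for $B_n$ or $E_n$. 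I expect no conceptual difficulty beyond that, since every structural step is already demonstrated in the proofs of Theorems \ref{thm:-1-2} and \ref{thm:1.1}; I would organize the write-up as four short paragraphs, each one swap-of-summation followed by one invocation of a classical reflection formula, and explicitly note that \eqref{0-14} and \eqref{0-18} follow from \eqref{0-12} and \eqref{0-16} respectively by multiplying through by $(-1)^n$, so that only two genuinely new computations (the $D_3$-polynomial and the $D_4$-number/polynomial cases) need to be carried out in full.
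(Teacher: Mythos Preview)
Your plan is correct and matches the paper's intent: the paper simply writes ``The proofs are straightforward from the definitions and are omitted,'' and what you describe---swap the order of summation, collapse the inner sum via the binomial theorem, and invoke the reflection/difference identities $B_n(1-x)=(-1)^nB_n(x)$, $B_n(x+1)-B_n(x)=nx^{n-1}$, $(-1)^nE_n(-x)=2x^n-E_n(x)$---is exactly that routine verification. Your reductions $(\ref{0-14})=(-1)^n\cdot(\ref{0-12})$ and $(\ref{0-18})=(-1)^n\cdot(\ref{0-16})$ are valid and efficient; for $(\ref{0-16})$ the clean computation is $\sum_k\binom{n}{k}E_k(1/2)=E_n(3/2)=2/2^{n}-E_n(1/2)$ and $\sum_k\binom{n}{k}2^{-k}=(3/2)^n$, which immediately gives the $(3^n-2)/2^n$ term.

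One simplification worth incorporating: for the polynomial identities $(\ref{0-15})$, $(\ref{0-17})$, $(\ref{0-19})$ you need not redo the double-sum swap from scratch. Since you will have already established the closed forms for the dual \emph{numbers}, just substitute those closed forms directly. For instance, plugging $((-1)^kB_k)^\ast=B_k+(-1)^kk$ into the left side of $(\ref{0-15})$ splits it as $\sum_k\binom{n}{k}B_kx^{n-k}+\sum_k\binom{n}{k}(-1)^kk\,x^{n-k}=B_n(x)-n(x-1)^{n-1}$ in one line (using $k\binom{n}{k}=n\binom{n-1}{k-1}$). The Euler cases $(\ref{0-17})$ and $(\ref{0-19})$ go the same way once $(\ref{0-16})$ is in hand. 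This avoids the sign-tracking detours you flagged as the main obstacle.
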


\begin{proof}
The proofs are straightforward from the definitions and are omitted. 
\end{proof}
\eop

\section{Generating functions of self-dual number sequences}

In this section, we give some structures of self-dual number sequences, by using which the characterizations,  
relationships, and other properties of the self-dual number sequences can be obtained. The first half of the following theorem is 
presented in Prodinger \cite{Pro}. 

\begin{theorem}\label{thm:1.3}
Let $a(x)=\sum_{n\geq 0}a_{n}x^{n}$. Then its coefficient sequence is a self-dual sequence with respect to $D_{1}$ ($D_{2}$),  
i.e., it satisfies 

\[
\sum^{n}_{k=0}\binom{n}{k}(-1)^{k}a_{k}=a_{n}\quad (-a_{n})
\]
for $n\in {\bN}\cup\{0\}$ if and only if $a(x)$ satisfies the equation 

\[
a\left( \frac{x}{x-1}\right)=(1-x) a(x) \quad (-(1-x)a(x)), 
\]
respectively. The coefficient sequence of $a(x)$ is a self-dual sequence with respect to $D_{3}$ ($D_{4}$), i.e., it satisfies 

\[
\sum^{n}_{k=0}\binom{n}{k}(-1)^{n}a_{k}=a_{n}\quad (-a_{n})
\]
for $n\in {\bN}\cup\{0\}$ if and only if $a(x)$ satisfies the equation 

\[
a\left( -\frac{x}{1+x}\right)=(1+x) a(x) \quad (-(1+x)a(x)),
\]
respectively. 
\end{theorem}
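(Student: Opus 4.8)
The plan is to translate each self-duality relation on the coefficient sequence $\{a_n\}$ into an identity on the generating function $a(x)$ by recognizing the binomial transform as the action of a Riordan array on the column vector of coefficients. Concretely, for $D_1$ the hypothesis says $a_n = \sum_{k=0}^n \binom{n}{k}(-1)^k a_k$, which by the fundamental theorem of Riordan arrays \eqref{FTR} applied to the involution $R_1 = \left(\frac{1}{1-t},\frac{-t}{1-t}\right)$ of Corollary~\ref{cor:-1-1} amounts to saying that $a(x)$ is a fixed point of the linear operator $f(x)\mapsto \frac{1}{1-x} f\!\left(\frac{-x}{1-x}\right)$. Wait — one must be careful about the variable conventions: the Riordan array $\left(d(t),h(t)\right)$ sends a sequence with ordinary generating function $g(t)$ to the sequence with ordinary generating function $d(t)\,g(h(t))$, so the self-duality $a_n=\sum_k d_{n,k}a_k$ is exactly $a(x) = d(x)\,a(h(x))$. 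For $R_1$ this reads $a(x) = \frac{1}{1-x}\,a\!\left(\frac{-x}{1-x}\right)$; substituting $x \mapsto \frac{x}{x-1}$ (which is an involution, since $-h(-t)=\bar h(t)$ for this array) and simplifying $\frac{1}{1-\frac{x}{x-1}} = 1-x$ turns this into the stated form $a\!\left(\frac{x}{x-1}\right) = (1-x)\,a(x)$. So the first step is to state this dictionary once and verify the elementary substitution algebra.

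Second, I would handle the three remaining cases by the same mechanism, only keeping track of signs. For $D_2$ the matrix is $R_2 = \left(\frac{1}{t-1},\frac{t}{t-1}\right) = -R_1$, so the fixed-point equation picks up an overall minus sign: $a(x) = -\frac{1}{1-x}\,a\!\left(\frac{-x}{1-x}\right)$, and the same involutive substitution gives $a\!\left(\frac{x}{x-1}\right) = -(1-x)\,a(x)$. For $D_3$ and $D_4$ the relevant arrays are $R_3 = \left(\frac{1}{1+t},\frac{-t}{1+t}\right)$ and $R_4 = -R_3$; here $h(t) = \frac{-t}{1+t}$ with compositional inverse $\bar h(t)=\frac{-t}{1-t}$, so the self-duality reads $a(x) = \frac{1}{1+x}\,a\!\left(\frac{-x}{1+x}\right)$ (resp. with a minus sign), and the substitution $x\mapsto \frac{-x}{1+x}$ — again an involution — together with $\frac{1}{1+\frac{-x}{1+x}} = 1+x$ yields $a\!\left(\frac{-x}{1+x}\right) = (1+x)\,a(x)$ (resp. $-(1+x)a(x)$), which is precisely the claimed equation (one may present it in either equivalent orientation). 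Each case is a two-line computation once the dictionary is in place.

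Third, I would note the logical structure: since $R_i$ is an \emph{involution}, the operator $T_i\colon f\mapsto d_i(x) f(h_i(x))$ satisfies $T_i^2 = \mathrm{id}$ on formal power series, so ``$a$ is a fixed point of $T_i$'' is genuinely an equivalence, and the functional equation obtained after the substitution is equivalent to the fixed-point equation, hence to the coefficientwise self-duality. I would also remark that matching coefficients of $x^n$ on both sides of, say, $a(x) = \frac{1}{1-x}a\!\left(\frac{-x}{1-x}\right)$ reproduces exactly the sum $\sum_{k=0}^n\binom{n}{k}(-1)^k a_k$ because $[x^n]\frac{1}{1-x}\left(\frac{-x}{1-x}\right)^k = [x^n]\frac{(-x)^k}{(1-x)^{k+1}} = (-1)^k\binom{n}{k}$ — this is just \eqref{Radef} for $R_1$ — so no convergence issues arise; everything is an identity in $\bR[\![x]\!]$.

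The main obstacle is purely bookkeeping: getting the direction of the Möbius substitutions and the placement of the sign consistent across all four cases, and in particular making sure the ``argument side'' versus ``value side'' of the functional equation matches the exact form displayed in the theorem (the paper writes $a\!\left(\frac{x}{x-1}\right)=(1-x)a(x)$ for $D_1$, i.e. the substituted argument on the left). Since each $h_i$ used here is its own compositional inverse up to the sign flip $t\mapsto -t$ (this is the content of $\bar h(t) = -h(-t)$ from Corollary~\ref{cor:-1-0}), applying the substitution once more returns the original equation, which is what makes the ``only if'' and ``if'' directions collapse into a single calculation. I do not expect any step to require more than elementary manipulation of rational functions, so beyond organizing the four cases cleanly there is no real difficulty; I would likely do $D_1$ in full and then indicate that $D_2, D_3, D_4$ follow identically, inserting the sign from $R_2 = -R_1$, $R_4 = -R_3$ and the base change from $t/(1-t)$ to $t/(1+t)$.
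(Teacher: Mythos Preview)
Your proposal is correct and takes a somewhat different route from the paper. The paper cites \cite{Pro} for the $D_1,D_2$ cases and proves $D_3,D_4$ by a direct power-series computation: it expands $\frac{1}{1+x}\,a\!\left(\frac{-x}{1+x}\right)$ termwise using $(1+x)^{-k-1}=\sum_{j\ge 0}\binom{-k-1}{j}x^{j}$, reindexes the double sum, and reads off that the $n$th coefficient equals $(-1)^n\sum_{k}\binom{n}{k}a_k$. Your approach instead invokes the fundamental theorem of Riordan arrays \eqref{FTR} together with the explicit involutions $R_i$ from Corollary~\ref{cor:-1-1}, so that the coefficient identity $a_n=\sum_k (R_i)_{n,k}a_k$ is immediately equivalent to the functional fixed-point equation $a(x)=d_i(x)\,a(h_i(x))$; the stated form then drops out after a one-line rearrangement (indeed, since $-x/(1-x)=x/(x-1)$ and $-x/(1+x)$ already appear on the right, no further substitution is strictly needed, only multiplying through by $1\mp x$). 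Your argument is more structural and treats all four cases uniformly with the machinery the paper has already built, while the paper's proof is a self-contained bare-hands computation that does not require the reader to recall the Riordan-array dictionary. One small simplification you could make: the involutive substitution step you describe is redundant, because the equation $a(x)=d_i(x)\,a(h_i(x))$ is already literally the displayed functional equation after clearing the factor $d_i(x)^{-1}$.
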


\begin{proof}
Since the first half of the theorem is given in \cite{Pro}. It is sufficient to prove the second half. 

\bns
&&\pm \frac{1}{1+x}a\left( -\frac{x}{1+x}\right)=\pm \sum_{k=0}^{\infty} a_{k}(-1)^{k}x^{k}(1+x)^{-k-1}\\
&=&\pm \sum_{k=0}^{\infty} a_{k}(-1)^{k}x^{k}\sum^{\infty}_{j=0}\binom{-k-1}{j}x^{j}\\
&=&\pm \sum_{k=0}^{\infty} \sum^{\infty}_{j=0}a_{k}(-1)^{k+j}\binom{k+j}{j}x^{k+j}\\
&=&\pm \sum_{n=0}^{\infty} \left((-1)^{n}\sum^{n}_{j=0}\binom{n}{j}a_{n-j}\right)x^{n}=\pm\sum_{n=0}^{\infty}a_{n}x^{n}=\pm a(x).
\ens
The proof is complete. 
\end{proof}
\eop

\begin{corollary}\label{cor:1.4}
Let $\{ a_{n}\}_{n\geq 0}$ be a given sequence with ordinary generating function $a(x)=\sum_{n\geq 0}a_{n}x^{n}$. Then 

(1) \cite{SunZH} $\{ a_{n}\}_{n\geq 0}$ is a self-dual sequence with respect to $D_{1}$ if and only if $\{ 2a_{n+1}-a_{n}\}_{n\geq 0}$ is a self-dual sequence with respect to $D_{2}$.

(2) $\{ a_{n}\}_{n\geq 0}$ is a self-dual sequence with respect to $D_{3}$ if and only if $\{ 2a_{n+1}+a_{n}\}_{n\geq 0}$ is a self-dual sequence with respect to $D_{4}$.
\end{corollary}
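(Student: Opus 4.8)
The plan is to derive both equivalences from the generating-function characterizations in Theorem~\ref{thm:1.3}, so that the whole argument collapses to elementary algebra with rational substitutions. Write $a(x)=\sum_{n\geq 0}a_nx^n$, and let $b(x)=\sum_{n\geq 0}b_nx^n$ be the ordinary generating function of the transformed sequence. Using the index shift $\sum_{n\geq 0}a_{n+1}x^n=(a(x)-a_0)/x$, one obtains the single structural identity $x\,b(x)=(2-x)\,a(x)-2a_0$ in the case $b_n=2a_{n+1}-a_n$ relevant to part~(1), and $x\,b(x)=(2+x)\,a(x)-2a_0$ in the case $b_n=2a_{n+1}+a_n$ relevant to part~(2). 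Everything else will follow by evaluating this identity both at $x$ and at the image of $x$ under the appropriate involution from Theorem~\ref{thm:1.3}.

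For part~(1), I would set $\phi(x)=x/(x-1)$, note that $\phi$ is an involution ($\phi(\phi(x))=x$), and recall from Theorem~\ref{thm:1.3} that $\{a_n\}$ is self-dual with respect to $D_1$ iff $a(\phi(x))=(1-x)a(x)$, whereas $\{b_n\}$ is self-dual with respect to $D_2$ iff $b(\phi(x))=-(1-x)b(x)$. Writing $x\,b(x)=(2-x)a(x)-2a_0$ at $x$ and at $\phi(x)$ and subtracting the two instances, the term $2a_0$ cancels, and using $2-\phi(x)=(x-2)/(x-1)$ one is left with
\[
x\,b(x)-\frac{x}{x-1}\,b(\phi(x))=(2-x)\,a(x)-\frac{x-2}{x-1}\,a(\phi(x)).
\]
If $a(\phi(x))=(1-x)a(x)$ then the right-hand side vanishes (since $\frac{x-2}{x-1}(1-x)=(2-x)$), which forces $b(\phi(x))=(x-1)b(x)=-(1-x)b(x)$; conversely, if $b(\phi(x))=-(1-x)b(x)$ then the left-hand side vanishes, which forces $a(\phi(x))=(1-x)a(x)$. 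Part~(2) goes through verbatim after replacing $\phi$ by the involution $\psi(x)=-x/(1+x)$, the factor $1-x$ by $1+x$, the factor $2-x$ by $2+x$, and the $D_1$/$D_2$ characterizations by the $D_3$/$D_4$ ones; in particular the known statement~(1) from \cite{SunZH} is recovered inside this uniform framework.

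The argument has no genuine obstacle, only bookkeeping to watch: computing the shifted generating function correctly (in particular tracking the boundary term $a_0$, which must --- and does --- cancel when the two instances of the structural identity are subtracted), verifying that $\phi$ and $\psi$ are involutions, and simplifying the rational expressions $2-\phi(x)$ and $2+\psi(x)$. A direct combinatorial route --- manipulating the binomial-transform sums to show that $2a_{n+1}\mp a_n$ inherits the sign flip between $D_1$ and $D_2$ (resp.\ $D_3$ and $D_4$) --- is also available but more cumbersome, so I would prefer the generating-function approach via Theorem~\ref{thm:1.3}.
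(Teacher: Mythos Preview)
Your proposal is correct and follows essentially the same route as the paper: both express the generating function of the transformed sequence as $b(x)=\frac{x\pm 2}{x}a(x)-\frac{2}{x}a_0$ and then apply the functional-equation characterizations of Theorem~\ref{thm:1.3} under the substitution $x\mapsto -x/(1+x)$ (resp.\ $x\mapsto x/(x-1)$). The only cosmetic difference is that you subtract the identity at $x$ from the identity at the involuted point to cancel the $a_0$ term, whereas the paper carries $a_0$ through the substitution directly; the paper also treats only part~(2), citing \cite{SunZH} for~(1), while your uniform presentation handles both at once.
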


\begin{proof} 
The result shown in (1) is given in \cite{SunZH}. It is sufficient to prove (2). Let 

\[
b_{n}=2a_{n+1}+a_{n}
\]
with its ordinary generating function $b(x)=\sum_{n\geq 0}b_{n}x^{n}$. Thus,

\[
b(x)=\frac{2(a(x)-a_{0})}{x}+a(x)=\frac{x+2}{x}a(x)-\frac{2}{x}a_{0},
\]
which implies 

\bns
&&b\left( -\frac{x}{1+x}\right)=-\frac{x+2}{x}a\left( -\frac{x}{1+x}\right) +\frac{2}{x}(1+x)a_{0}\\
&=&-(1+x)\left( \frac{x+2}{x}\frac{1}{1+x}a\left( -\frac{x}{1+x}\right) -\frac{2}{x}a_{0}\right).
\ens
Therefore, 

\[
a\left( -\frac{x}{1+x}\right)=(1+x) a(x)
\]
if and only if 

\bns
&&b\left( -\frac{x}{1+x}\right)=-(1+x)\left( \frac{x+2}{x}a(x) -\frac{2}{x}a_{0}\right)=-(1+x)b(x).
\ens
Based on Theorem \ref{thm:1.3} we have finished the proof. 
\end{proof}
\eop

\begin{theorem}\label{thm:1.5}
Let $\{ a_{n}\}_{n\geq 0}$ be a given sequence with exponential generating function $a^{\ast}(x)=\sum_{n\geq 0}a_{n}x^{n}/n!$. Then

(1) \cite{SunZH} $\{ a_{n}\}_{n\geq 0}$ is a self-dual sequence with respect to $D_{1}$ if and only if $a^\ast (x)e^{-x/2}$ is an even function.

(2) \cite{SunZH} $\{ a_{n}\}_{n\geq 0}$ is a self-dual sequence with respect to $D_{2}$ if and only if $a^\ast (x)e^{-x/2}$ is an odd function.

(3) $\{ a_{n}\}_{n\geq 0}$ is a self-dual sequence with respect to $D_{3}$ if and only if $a^\ast (x)e^{x/2}$ is an even function.

(4) $\{ a_{n}\}_{n\geq 0}$ is a self-dual sequence with respect to $D_{4}$ if and only if $a^\ast (x)e^{x/2}$ is an odd function.
\end{theorem}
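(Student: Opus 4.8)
The plan is to prove all four equivalences by a single generating-function computation, then observe that parts (1) and (2) are already in \cite{SunZH} so only (3) and (4) require proof — and in fact (3) and (4) follow from (1) and (2) by a symmetry substitution, which is the cleanest route. First I would recall from Theorem \ref{thm:-1-2} and Corollary \ref{cor:-1-1} that $D_3$ differs from $D_1$ only by replacing $R_1=(\binom{n}{k}(-1)^k)$ with $R_3=(\binom{n}{k}(-1)^n)$, and that $R_3=(1,-t)R_1$ on the level of matrices. Concretely, $\{a_n\}$ is self-dual with respect to $D_3$ iff $a_n=(-1)^n\sum_{k=0}^n\binom{n}{k}(-1)^{-k}\cdot(-1)^k a_k\cdot(-1)^{-k}$ — more usefully, iff the sequence $\{(-1)^n a_n\}$ is self-dual with respect to $D_1$. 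Indeed, applying $D_1$ to $b_n:=(-1)^n a_n$ gives $\sum_k\binom{n}{k}(-1)^k b_k=\sum_k\binom{n}{k}(-1)^k(-1)^k a_k=\sum_k\binom{n}{k}a_k$, while $D_3$ applied to $\{a_n\}$ asks $\sum_k\binom nk(-1)^n a_k=a_n$, i.e. $\sum_k\binom nk a_k=(-1)^n a_n=b_n$; so $\{a_n\}$ is $D_3$-self-dual iff $\{(-1)^n a_n\}$ is $D_1$-self-dual. The exponential generating function of $\{(-1)^n a_n\}$ is $a^\ast(-x)$, so by part (1) this holds iff $a^\ast(-x)e^{-x/2}$ is even, i.e. iff $a^\ast(x)e^{x/2}$ is even (replace $x$ by $-x$). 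That is exactly (3). The same argument with part (2) in place of part (1) gives (4): $\{a_n\}$ is $D_4$-self-dual iff $\{(-1)^n a_n\}$ is $D_2$-self-dual iff $a^\ast(-x)e^{-x/2}$ is odd iff $a^\ast(x)e^{x/2}$ is odd.

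Alternatively, if one prefers a self-contained derivation not leaning on \cite{SunZH}, I would compute directly: set $F(x)=a^\ast(x)e^{x/2}=\sum_{n\ge0}\bigl(\sum_{k=0}^n\binom nk a_k 2^{-(n-k)}\bigr)x^n/n!$, and note the effect of the $D_3$ relation is encoded by the operator sending $a^\ast(x)$ to $e^x a^\ast(-x)$, since $\sum_n\bigl(\sum_k\binom nk(-1)^k a_k\bigr)x^n/n!=e^x a^\ast(-x)$ after swapping the sign convention appropriately; the $(-1)^n$ factor in $D_3$ is reflected by $x\mapsto -x$ on the outside. Chasing the definitions, the $D_3$-self-duality condition $a^\ast(x)=e^{-x}a^\ast$ evaluated suitably becomes $F(-x)=F(x)$. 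The bookkeeping of the four sign placements ($(-1)^k$ vs $(-1)^n$, and the extra global $\pm$) is the only thing to be careful about, and it is precisely the place where an error would creep in.

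The main obstacle is therefore not any deep idea but the sign/convention bookkeeping: making sure that "$(-1)^n$ outside the sum" corresponds to $x\mapsto -x$ applied to the whole generating function, that "$(-1)^k$ inside" corresponds to $x\mapsto -x$ applied to $a^\ast$ before multiplying by $e^x$, and that the shift $e^{\pm x/2}$ is the exponential generating function of the binomial convolution with $\{2^{-n}\}$ that turns the matrix $R_1$ (resp. $R_3$) into its conjugate by $(1,t)$ in a way that makes the involution condition a parity condition. Once the reduction "$D_3$ for $\{a_n\}$ $\iff$ $D_1$ for $\{(-1)^n a_n\}$" (and similarly $D_4\iff D_2$) is stated cleanly, (3) and (4) drop out of (1) and (2) in one line each, which is the version I would write.
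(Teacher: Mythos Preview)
Your proposal is correct. The reduction you give --- $\{a_n\}$ is $D_3$-self-dual iff $\{(-1)^n a_n\}$ is $D_1$-self-dual, and likewise $D_4\leftrightarrow D_2$ --- is easily verified (your check that $b_n:=(-1)^na_n$ satisfies $\sum_k\binom nk(-1)^kb_k=\sum_k\binom nk a_k$ is the whole point), and combined with the observation that the EGF of $\{(-1)^na_n\}$ is $a^\ast(-x)$ it delivers (3) and (4) in one line each from (1) and (2).

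The paper takes a different route: rather than reducing to the cited cases, it reproves (3) and (4) from scratch by expanding the Cauchy product
\[
a^\ast(-x)e^{-x}=\sum_{n\ge0}\Bigl((-1)^n\sum_{k=0}^n\binom nk a_k\Bigr)\frac{x^n}{n!},
\]
so that the $D_3$ (resp.\ $D_4$) condition reads $a^\ast(-x)e^{-x}=\pm a^\ast(x)$, i.e.\ $a^\ast(-x)e^{-x/2}=\pm a^\ast(x)e^{x/2}$. Your reduction is shorter and more structural --- it explains \emph{why} the factor $e^{-x/2}$ in (1)--(2) flips to $e^{x/2}$ in (3)--(4), since $a_n\mapsto(-1)^na_n$ is exactly $x\mapsto -x$ on EGFs --- but it relies on \cite{SunZH} for (1) and (2). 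The paper's direct computation is self-contained for (3) and (4) and parallel to, rather than dependent on, the argument in \cite{SunZH}. Your ``alternative'' second paragraph is essentially the paper's computation, though your write-up there is looser than it needs to be; if you go that route, the clean identity to state is simply $e^{-x}a^\ast(-x)=\sum_n\bigl((-1)^n\sum_k\binom nk a_k\bigr)x^n/n!$, from which both directions of (3) and (4) are immediate.
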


\begin{proof}
The results shown in (1) and (2) are given in \cite{SunZH}. We only need to prove (3) and (4) here by using a similar argument:

\bns
&&a^\ast (-x)e^{-x}=\sum_{k\geq 0}(-1)^{k}a_{k}\frac{x^{k}}{k!}\sum_{j\geq 0}(-1)^{j}\frac{x^{j}}{j!}\\
&=&\sum_{k\geq 0}\sum_{j\geq 0}(-1)^{k+j}a_{k}\frac{x^{k+j}}{k!j!}\\
&=&\sum_{n\geq 0}\left(\sum^{n}_{k=0}(-1)^{n}\binom{n}{k}a_{k}\right) \frac{x^{n}}{n!}.
\ens
Hence, if 

\[
\sum^{n}_{k=0}(-1)^{n}\binom{n}{k}a_{k}= a_{n}\quad \text{and}\quad -a_{n},
\]
then   

\[
a^{\ast}(-x)e^{-x}= a(x)\quad \text{and} \quad -a(x),
\]
respectively, which can be written briefly as  

\[
a^{\ast}(-x)e^{-x/2}=\pm a(x)e^{x/2}.
\]
If the case of positive sign on the right-hand side holds, i.e., $\{a_{n}\}$ is a self-dual sequence with respect to $D_{3}$, then $a^{\ast}(x)e^{x/2}$ is an even function; while the negative sign holds, or equivalently, $\{a_{n}\}$ is a self-dual sequence with respect to $D_{4}$, then the function $a^{\ast}(x)e^{x/2}$ is odd. It is easy to see the sufficiencies of (3) and (4) are also true.  This concludes the proof of the theorem.
\end{proof}
\eop

\cite{SunZH} uses Theorem \ref{thm:1.5} to derive a numerous identities. \cite{Wang} uses umbral calculus to reprove and extend some of them. We now survey their results and extend them to other self-dual sequences. 

\begin{theorem}\label{thm:1.6}
For any function $f$, we have 

(1) \cite{SunZH} $\sum^{n}_{k=0}\binom{n}{k} \left( f(k)-(-1)^{n-k}\sum^{k}_{j=0}\binom{k}{j}f(j)\right) a_{n-k}=0$ for $n\in {\bN}\cup\{0\}$ if $\{ a_{n}\}_{n\geq 0}$ is a self-dual sequence with respect to $D_{1}$.

(2) \cite{Wang} $\sum^{n}_{k=0}\binom{n}{k} \left( f(k)+(-1)^{n-k}\sum^{k}_{j=0}\binom{k}{j}f(j)\right) a_{n-k}=0$ for $n\in {\bN}\cup\{0\}$ if $\{ a_{n}\}_{n\geq 0}$ is a self-dual sequence with respect to $D_{2}$.

(3) $\sum^{n}_{k=0}\binom{n}{k} \left( f(k)-\sum^{k}_{j=0}(-1)^{n-j}\binom{k}{j}f(j)\right) a_{n-k}=0$ for $n\in {\bN}\cup\{0\}$ if $\{ a_{n}\}_{n\geq 0}$ is a self-dual sequence with respect to $D_{3}$.

(4) $\sum^{n}_{k=0}\binom{n}{k} \left( f(k)+\sum^{k}_{j=0}(-1)^{n-j}\binom{k}{j}f(j)\right) a_{n-k}=0$ for $n\in {\bN}\cup\{0\}$ if $\{ a_{n}\}_{n\geq 0}$ is a self-dual sequence with respect to $D_{4}$.
\end{theorem}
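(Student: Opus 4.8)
The plan is to derive all four identities from the self-dual property by a standard double-counting manipulation applied to the binomial convolution of the sequence $\{a_n\}$ with the ``discrepancy'' sequence measuring how far $\{f(k)\}$ is from being fixed by the relevant dual matrix. Concretely, for part (1) with $D_1$, I would start from the observation that for \emph{any} function $f$ the quantity
\[
c_n := \sum^{n}_{k=0}\binom{n}{k}(-1)^{n-k}\Bigl(\sum^{k}_{j=0}\binom{k}{j}f(j)\Bigr)
\]
is obtained from $\{f(k)\}$ by applying the matrix $R_3 R_1$ (first the inner $D_1$-type transform, then the outer $D_3$-type alternating sum), and the point is that the self-dual hypothesis on $\{a_n\}$ lets one move that composite operator onto $\{a_n\}$ inside a binomial convolution. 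So the key algebraic identity I want is: if $\{a_n\}$ is self-dual with respect to $D_1$ (i.e. $a_n=\sum_k\binom nk(-1)^k a_k$), then for every sequence $\{b_n\}$,
\[
\sum^{n}_{k=0}\binom{n}{k} b_k\, a_{n-k} = \sum^{n}_{k=0}\binom{n}{k}\Bigl(\sum^{k}_{j=0}\binom{k}{j}(-1)^{k-j} b_j\Bigr)\, a_{n-k}.
\]
Taking $b_k=f(k)$ and rearranging gives exactly the vanishing claimed in (1). Parts (2)--(4) are the same computation with $R_1$ replaced by $R_2$, $R_3$, $R_4$ respectively, the sign $\pm$ tracking the pseudo-involution sign and the placement of $(-1)^n$ versus $(-1)^k$ tracking whether the row index or column index carries the alternation.

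First I would prove the boxed convolution identity. The clean way is via generating functions: let $a^\ast(x)=\sum a_n x^n/n!$ be the EGF of $\{a_n\}$ and $b^\ast(x)=\sum b_n x^n/n!$. By Theorem \ref{thm:1.5}(1), self-duality with respect to $D_1$ is equivalent to $a^\ast(x)e^{-x/2}$ being even, i.e. $a^\ast(x) = e^{x} a^\ast(-x)$. The EGF of the left side of the boxed identity is $b^\ast(x)a^\ast(x)$; the inner alternating transform of $\{b_j\}$ has EGF $e^{x}b^\ast(-x)$ (this is the computation already done in the proof of Theorem \ref{thm:1.5}(3), read with $b$ in place of $a$), so the EGF of the right side is $e^{x}b^\ast(-x)\,a^\ast(x)$. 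These are equal precisely when $b^\ast(x)a^\ast(x)=e^{x}b^\ast(-x)a^\ast(x)$; dividing by $a^\ast(x)$ (a nonzero fps) this is $b^\ast(x)=e^{x}b^\ast(-x)$ — but wait, that is a condition on $b$, not $a$. The fix is to instead write the left EGF as $b^\ast(x)a^\ast(x)$ and, using $a^\ast(x)=e^xa^\ast(-x)$, also as $b^\ast(x)e^x a^\ast(-x)$, while the right EGF is $e^xb^\ast(-x)a^\ast(x)=e^xb^\ast(-x)e^xa^\ast(-x)$; equality then reduces to $b^\ast(x) = e^x b^\ast(-x)$ again, which is false in general. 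So the generating-function shortcut needs the correct bookkeeping: one should apply the $D_1$-transform to the \emph{product} index, not split it. I would therefore instead just do it combinatorially: expand $f(k) - (-1)^{n-k}\sum_j\binom kj f(j)$, multiply by $\binom nk a_{n-k}$, sum, split into two sums, and in the second sum swap the order of summation over $j \le k \le n$ and collect the inner sum $\sum_{k=j}^{n}\binom nk\binom kj(-1)^{n-k}a_{n-k} = \binom nj\sum_{i=0}^{n-j}\binom{n-j}{i}(-1)^{n-j-i}a_i$, which by self-duality of $\{a_n\}$ equals $\binom nj a_{n-j}$; after reindexing this cancels the first sum exactly.

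The main steps, in order: (i) write $S_n := \sum_k \binom nk\bigl(f(k) - (-1)^{n-k}\sum_j\binom kj f(j)\bigr)a_{n-k}$ and split $S_n = \Sigma_1 - \Sigma_2$; (ii) in $\Sigma_2$ interchange the $j$ and $k$ summations, pull $\binom nj f(j)$ out, and evaluate the inner sum using the Vandermonde-type factorization $\binom nk\binom kj = \binom nj\binom{n-j}{k-j}$; (iii) invoke the $D_1$-self-duality of $\{a_n\}$ in the form $\sum_{i=0}^{m}\binom mi(-1)^{m-i}a_i = a_m$ (equivalently $R_1$ is an involution, Corollary \ref{cor:-1-1}) to collapse the inner sum to $\binom nj a_{n-j}$; (iv) reindex $\Sigma_2$ and observe it is term-by-term equal to $\Sigma_1$, so $S_n=0$; (v) repeat verbatim for $D_2,D_3,D_4$, the only changes being an overall sign in (iv) for $D_2$ (since $a_n = -\sum_k\binom nk(-1)^k a_k$ there, and two minus signs multiply to give the matching sign with the $+$ in the statement) and, for $D_3,D_4$, the alternation sitting on $n-j$ rather than on $n-k$ inside the $f$-term, which is exactly what the inner sum $\sum_k\binom nk\binom kj(-1)^{?}a_{n-k}$ produces when the $D_3/D_4$ self-duality $\sum_i\binom mi(-1)^m a_i = \pm a_m$ is used. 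The step I expect to be the genuine obstacle — or at least the one demanding care — is (ii)--(iii): one must get the index on the alternating sign right so that after the interchange the surviving inner sum is \emph{precisely} the left-hand side of the self-duality relation and not a shifted or twisted variant; the $(-1)^{n-k}$ versus $(-1)^{n-j}$ distinction between the $D_1/D_2$ cases and the $D_3/D_4$ cases is doing real work here and is the reason the theorem separates the four cases rather than stating one identity.
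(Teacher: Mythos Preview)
Your combinatorial interchange-of-summation argument is correct in outline and is more explicit than what the paper actually offers: the paper merely says that (3) and (4) follow ``similarly to (1) and (2) by using either Theorem~\ref{thm:1.5} or umbral calculus'' and omits all details, with (1) and (2) themselves attributed to \cite{SunZH} and \cite{Wang}. Your direct route via $\binom nk\binom kj=\binom nj\binom{n-j}{k-j}$ followed by the self-duality relation is self-contained and avoids the generating-function machinery altogether, which is a modest gain.

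There is, however, a sign slip in your steps (ii)--(iii) that you should fix. After the substitution $i=n-k$ the inner sum is
\[
\binom nj\sum_{i=0}^{n-j}\binom{n-j}{i}(-1)^{i}a_{i},
\]
with $(-1)^i$, not $(-1)^{n-j-i}$; and the $D_1$ relation is $\sum_{i=0}^m\binom mi(-1)^{i}a_i=a_m$, not the $(-1)^{m-i}$ version you wrote (that form yields $(-1)^m a_m$ and is the $D_3$ flavour). Your two slips are consistent with each other, so $\Sigma_2=\Sigma_1$ survives, but the intermediate line is incorrect as written. Separately, the EGF approach you started and abandoned does work and is presumably what the paper intends via Theorem~\ref{thm:1.5}: in $\sum_k\binom nk(-1)^{n-k}g(k)\,a_{n-k}$ the factor $(-1)^{n-k}$ attaches to $a_{n-k}$, not to $g$, so this is the binomial convolution of $\{g(k)\}$ with $\{(-1)^ma_m\}$, with EGF $G(x)\,a^\ast(-x)=e^xF(x)\,a^\ast(-x)$; equality with $F(x)a^\ast(x)$ for \emph{all} $F$ is then exactly $a^\ast(x)=e^xa^\ast(-x)$, i.e.\ the evenness of $a^\ast(x)e^{-x/2}$ from Theorem~\ref{thm:1.5}(1). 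The other three cases go the same way.
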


\begin{proof} The proofs of (3) and (4) are similar as (1) and (2) by using either Theorem \ref{thm:1.5} or umbral calculus. Hence, we omit  them. 
\end{proof}
\eop

From Theorem \ref{thm:-1-2}, $\{ (-1)^{n}B_{n}\}_{n\geq 0}$ and $\{ B_{n}\}_{n\geq 0}$ are self-dual sequences with respect to $D_{1}$ and $D_{3}$, respectively. $\left\{ E_{n}\left( \frac{1}{2}\right)-\frac{1}{2^{n}}\right\}$ and $\left\{ (-1)^{n}\left( E_{n}\left( \frac{1}{2}\right)-\frac{1}{2^{n}}\right)\right\}$ are self-dual sequences with respect to $D_{2}$ and $D_{4}$, respectively. Therefore, we may use Theorem \ref{thm:1.6} to obtain the following identities.

\begin{theorem}\label{thm:1.7}
Let $B_n$ and $E_n$ be Bernoulli numbers and Euler numbers, respectively. For any function $f$, we have 

\bn\label{0-20}
&&\sum^{n}_{k=0}\binom{n}{k} \left( (-1)^{n-k}f(k)-\sum^{k}_{j=0}\binom{k}{j}f(j)\right) B_{n-k}=0,\nonumber\\
&&\sum^{n}_{k=0}\binom{n}{k} \left( f(k)+(-1)^{n-k}\sum^{k}_{j=0}\binom{k}{j}f(j)\right) \left( E_{n-k}\left( \frac{1}{2}\right)-\frac{1}{2^{n-k}}\right)=0,\nonumber\\
&&\sum^{n}_{k=0}\binom{n}{k} \left( f(k)-\sum^{k}_{j=0}(-1)^{n-j}\binom{k}{j}f(j)\right) B_{n-k}=0,\nonumber\\
&&\sum^{n}_{k=0}\binom{n}{k} \left( (-1)^{n-k}f(k)+\sum^{k}_{j=0}(-1)^{k-j}\binom{k}{j}f(j)\right) \left( E_{n-k}\left( \frac{1}{2}\right)-\frac{1}{2^{n-k}}\right)=0,
\en
where the first identity is given in \cite{SunZH}. 
\end{theorem}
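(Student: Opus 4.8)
The plan is to obtain each of the four identities in \eqref{0-20} by specializing Theorem \ref{thm:1.6} to the appropriate self-dual sequences identified in Theorem \ref{thm:-1-2}. Concretely, the first identity comes from part (1) of Theorem \ref{thm:1.6} applied to $a_n=(-1)^nB_n$ (self-dual with respect to $D_1$), the second from part (2) applied to $a_n=E_n(1/2)-1/2^n$ (self-dual with respect to $D_2$), the third from part (3) applied to $a_n=B_n$ (self-dual with respect to $D_3$), and the fourth from part (4) applied to $a_n=(-1)^n\bigl(E_n(1/2)-1/2^n\bigr)$ (self-dual with respect to $D_4$). So the backbone of the proof is: state which self-dual sequence and which part of Theorem \ref{thm:1.6} is being used, then read off the resulting identity.

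The one point that needs care is that Theorem \ref{thm:1.6} produces identities in terms of $a_{n-k}$, whereas in \eqref{0-20} the Bernoulli/Euler factor is literally $B_{n-k}$ or $E_{n-k}(1/2)-1/2^{n-k}$, not $(-1)^{n-k}B_{n-k}$ or $(-1)^{n-k}\bigl(E_{n-k}(1/2)-1/2^{n-k}\bigr)$. Thus for the first and fourth identities one must absorb the sign $(-1)^{n-k}$ coming from $a_{n-k}$ into the bracketed factor. For instance, in part (1) with $a_{n-k}=(-1)^{n-k}B_{n-k}$, the term
\[
\binom{n}{k}\Bigl(f(k)-(-1)^{n-k}\sum_{j=0}^k\binom{k}{j}f(j)\Bigr)(-1)^{n-k}B_{n-k}
\]
equals $\binom{n}{k}\bigl((-1)^{n-k}f(k)-\sum_{j=0}^k\binom{k}{j}f(j)\bigr)B_{n-k}$, which is exactly the summand in the first line of \eqref{0-20}. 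The fourth identity is handled the same way: in part (4) with $a_{n-k}=(-1)^{n-k}\bigl(E_{n-k}(1/2)-1/2^{n-k}\bigr)$, multiplying the bracket $f(k)+\sum_{j=0}^k(-1)^{n-j}\binom{k}{j}f(j)$ by $(-1)^{n-k}$ turns it into $(-1)^{n-k}f(k)+\sum_{j=0}^k(-1)^{k-j}\binom{k}{j}f(j)$, matching the last line of \eqref{0-20}. For the second and third identities no sign adjustment is needed since the relevant self-dual sequences are $E_n(1/2)-1/2^n$ and $B_n$ themselves, so parts (2) and (3) of Theorem \ref{thm:1.6} apply verbatim.

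So the proof is essentially a bookkeeping exercise: invoke Theorem \ref{thm:-1-2} to know the four self-dual sequences and their dual relationships, invoke the four parts of Theorem \ref{thm:1.6}, substitute, and in two of the four cases redistribute the factor $(-1)^{n-k}$. The only thing I would be careful about is getting the signs consistent in the two cases that require the redistribution; everything else is immediate. The attribution of the first identity to \cite{SunZH} should be noted, as in the statement. Since the derivation is this mechanical, I expect the proof can be written in a few lines, or even dispatched with a remark that it follows directly from Theorems \ref{thm:-1-2} and \ref{thm:1.6} by substituting the indicated sequences and, where necessary, moving the sign $(-1)^{n-k}$ from the sequence factor into the bracket.
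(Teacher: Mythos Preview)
Your proposal is correct and follows exactly the paper's approach: the paper states right before Theorem \ref{thm:1.7} that the identities are obtained by applying Theorem \ref{thm:1.6} to the four self-dual sequences supplied by Theorem \ref{thm:-1-2}, and your write-up simply carries out this substitution explicitly, including the sign redistribution needed in the first and fourth cases.
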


\section{Applications of dual sequences to Bernoulli and Euler polynomials}

Let $\alpha\in{\bR}$. Denote 

\be\label{2-0-6}
C_{n,\alpha}(x)=(-1)^n\sum^{n}_{k=0}\binom{n}{k}a_{k}\left(x-\alpha\right)^{n-k}\quad \text{and}\quad C^{\ast}_{n,\alpha}(x)=(-1)^n\sum^{n}_{k=0}\binom{n}{k}a^{\ast}_{k}\left(x-\alpha\right)^{n-k}.
\ee
Then we have the following identities about $C_{n,\alpha}(x)$ and $C^{\ast}_{n,\alpha}(x)$, which is an extension of the main results shown in Sun \cite{Sun03}.

\begin{theorem}\label{thm:2-1}
Let $C_{n,\alpha}(x)$ and $C^{\ast}_{n,\alpha}(x)$ be defined as \eqref{2-0-6}, and let $k,\ell\in {\bN}\cup\{0\}$ and $x+y+z=1+2\alpha$. Then there hold

\bn\label{2-0-7}
&&\sum^{k}_{j=0}(-1)^{j}x^{k-j}\binom{k}{j}\frac{C_{\ell+j+1,\alpha}(y)}{\ell+j+1}+\sum^{\ell}_{j=0}(-1)^{j}x^{\ell-j}\binom{\ell}{j}\frac{C^{\ast}_{k+j+1,\alpha}(z)}{k+j+1}\nonumber\\
&=&\frac{a_{0}x^{k+\ell +1}}{(k+\ell+1)\binom{k+\ell}{k}}.
\en
In addition, we have 

\be\label{2-0-8}
\sum^{k}_{j=0}(-1)^{j}x^{k-j}\binom{k}{j}C_{\ell+j,\alpha}(y)=\sum^{\ell}_{j=0}(-1)^{j}x^{\ell-j}\binom{\ell}{j}C^{\ast}_{k+j,\alpha}(z)
\ee
and 

\bn\label{2-0-9}
&&\sum^{k}_{j=0}(-1)^{j}(\ell +j+1)x^{k-j+1}\binom{k+1}{j}C_{\ell+j,\alpha}(y)+\sum^{\ell}_{j=0}(-1)^{j}(k+j+1)x^{\ell-j+1}\binom{\ell+1}{j}C^{\ast}_{k+j,\alpha}(z)\nonumber\\
&=&(k+\ell+2)\left( (-1)^{k}C_{k+\ell+1, \alpha}(y)+(-1)^{\ell}C^{\ast}_{k+\ell+1,\alpha}(z)\right).
\en
\end{theorem}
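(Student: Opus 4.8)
The plan is to reduce all three identities to a single combinatorial generating-function computation and then specialize. The quantities $C_{n,\alpha}(x)$ are, up to the sign $(-1)^n$ and the shift by $\alpha$, the ``associated polynomials'' of the sequence $\{a_n\}$; likewise $C^\ast_{n,\alpha}$ for the dual sequence $\{a^\ast_n\}$. The natural tool is the (ordinary or exponential) generating function in an auxiliary variable. Specifically, I would introduce $F(t,u)=\sum_{n\geq 0}C_{n,\alpha}(x+tu)\,\frac{u^n}{n!}$-type objects, or more cleanly work with the bivariate generating function $\sum_{k,\ell}(\cdots)\,\frac{s^k}{k!}\frac{t^\ell}{\ell!}$ of the left-hand side of each identity and show it equals the generating function of the right-hand side. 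The sums $\sum_j(-1)^j x^{k-j}\binom{k}{j}C_{\ell+j+1,\alpha}(y)$ are discrete convolutions, so in generating-function language they become simple products once one notes that $\sum_j(-1)^j x^{k-j}\binom{k}{j}(\,\cdot\,)_{\ell+j}$ is the coefficient extraction of $(\text{shift by }x)$ applied to $C_{\bullet,\alpha}$.

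First I would establish the key ``duality symmetry'': because $\{a_n\}$ and $\{a^\ast_n\}$ are inverse sequences with respect to $R_1=(\binom{n}{k}(-1)^k)$, the polynomials $C_{n,\alpha}$ and $C^\ast_{n,\alpha}$ satisfy a reciprocity relation analogous to \eqref{0-10}, namely something of the form $C^\ast_{n,\alpha}(x)=(-1)^n C_{n,\alpha}(1+2\alpha-x)$ — this is exactly the substance of the computation in the proof of Theorem \ref{thm:1.1} carried out for a general sequence rather than for $\{B_n\}$. With $x+y+z=1+2\alpha$ in force, this reciprocity interchanges the two sums on the left of \eqref{2-0-7} under $k\leftrightarrow\ell$, $y\leftrightarrow z$, which both explains the symmetric shape of the claimed identities and reduces the work to a single ``half'' of each.

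Next I would prove \eqref{2-0-7} directly. Writing $S_1=\sum_{j=0}^k(-1)^j x^{k-j}\binom{k}{j}\frac{C_{\ell+j+1,\alpha}(y)}{\ell+j+1}$ and expanding $C_{\ell+j+1,\alpha}(y)$ by \eqref{2-0-6}, the inner binomial sum over $j$ can be resummed using the Vandermonde-type identity, collapsing the $y$-dependence into powers of $y-\alpha+x$; the division by $\ell+j+1$ is handled by integrating the generating function once (i.e.\ $\frac{1}{\ell+j+1}=\int_0^1 v^{\ell+j}\,dv$). Doing the same for the $C^\ast$ sum and adding, the reciprocity relation from the first step makes the two integrands combine into an integrand that telescopes/simplifies to $a_0 x^{k+\ell+1}v^{k+\ell}$ up to the beta-integral factor $\int_0^1 v^{k+\ell}\,dv\cdot\binom{k+\ell}{k}^{-1}$-type constant, which is precisely the right-hand side. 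Then \eqref{2-0-8} follows as the ``derivative'' form: differentiate \eqref{2-0-7} in $x$ (or equivalently compare the pieces of \eqref{2-0-7} before integration), and \eqref{2-0-9} follows by applying the recurrence $\frac{d}{dx}C_{n,\alpha}(x)=nC_{n-1,\alpha}(x)$ together with the binomial identity $(\ell+j+1)\binom{k+1}{j}=\cdots$ relating the two index patterns, then collecting terms.

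The main obstacle I anticipate is not any single step but bookkeeping: getting the signs $(-1)^j$, $(-1)^k$, $(-1)^\ell$ and the constant $\binom{k+\ell}{k}^{-1}/(k+\ell+1)$ to come out exactly right, since the $(-1)^n$ built into the definition of $C_{n,\alpha}$ interacts with the reciprocity sign and with the alternating convolution. I would control this by first checking the case $k=\ell=0$ (which should read $\frac{C_{1,\alpha}(y)}{1}+\frac{C^\ast_{1,\alpha}(z)}{1}=a_0 x$, an immediate consequence of the reciprocity $C^\ast_{1,\alpha}(z)=-C_{1,\alpha}(1+2\alpha-z)=-C_{1,\alpha}(x+y)$ and the linearity of $C_{1,\alpha}$), and the case $\{a_n\}=\{(-1)^nB_n\}$, where \eqref{2-0-7}--\eqref{2-0-9} must specialize to the Bernoulli identities of Sun \cite{Sun03}; matching those fixes all constants. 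With the normalization pinned down, the generating-function argument above goes through mechanically.
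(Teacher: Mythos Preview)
Your outline has the right skeleton---expand, show the non-$a_0$ pieces cancel, evaluate the $a_0$ piece by a beta integral, then differentiate to obtain the companion identities---but several of the load-bearing steps are either incorrect or missing.

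First, the reciprocity you posit is off. A direct computation from \eqref{2-0-6} and the definition of $a^\ast_k$ gives
\[
C^{\ast}_{n,\alpha}(x)=\sum_{j=0}^{n}\binom{n}{j}a_j(\alpha-x-1)^{n-j}=(-1)^{n}C_{n,\alpha}(2\alpha-1-x),
\]
not $(-1)^nC_{n,\alpha}(1+2\alpha-x)$. With $x+y+z=1+2\alpha$ this sends $z\mapsto x+y-2$, so it does not interchange the two sums in \eqref{2-0-7} in the way you describe; the $(k\leftrightarrow\ell,\ y\leftrightarrow z)$ symmetry of the left side is the symmetry under swapping $\{a_n\}\leftrightarrow\{a^\ast_n\}$, not a reduction to one sum.

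Second, and more seriously, the sentence ``the two integrands combine into an integrand that telescopes/simplifies to $a_0 x^{k+\ell+1}v^{k+\ell}$'' is precisely the heart of the matter and is not automatic. In the paper's proof one expands both sums, writes the left side as $c\,a_0+\sum_{i\ge 0}\frac{c_i}{i+1}a_{i+1}$, and must show each $c_i=0$. That vanishing comes down to the binomial identity
\[
\sum_{j=0}^{k}\binom{k}{j}\binom{\ell+j}{i}w^{\ell+j-i}
=\sum_{j=0}^{\ell}(-1)^{\ell-j}\binom{\ell}{j}\binom{k+j}{i}(1+w)^{k+j-i},
\]
which is Lemma~3.1 of \cite{Sun03}; nothing in your generating-function setup replaces this lemma, and Vandermonde alone does not do it. The $a_0$ term $c$ is then computed exactly by your integral idea, culminating in the beta integral $\int_0^1(1-t)^\ell t^k\,dt=\frac{1}{(k+\ell+1)\binom{k+\ell}{k}}$.

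Third, \eqref{2-0-8} does not come from differentiating \eqref{2-0-7} in $x$: the right side is $\frac{a_0 x^{k+\ell+1}}{(k+\ell+1)\binom{k+\ell}{k}}$, which depends on $x$, so the derivative is nonzero and you pick up unwanted index shifts from the $x^{k-j}$ factors. The paper instead differentiates in $y$ (with $z=1+2\alpha-x-y$), using $\frac{d}{dy}C_{n,\alpha}(y)=-nC_{n-1,\alpha}(y)$ and $\frac{d}{dy}C^{\ast}_{n,\alpha}(z)=nC^{\ast}_{n-1,\alpha}(z)$; the right side is constant in $y$, and \eqref{2-0-8} drops out immediately. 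Identity \eqref{2-0-9} is obtained by the same device after first replacing $(k,\ell)$ by $(k+1,\ell+1)$ in \eqref{2-0-8}.
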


\begin{proof}
Substituting \eqref{2-0-6} into the left-hand side of \eqref{2-0-7} yields

\bns
&&\sum^{k}_{j=0}\binom{k}{j}\frac{(-1)^{j}x^{k-j}}{\ell+j+1}\left( a_{0}(-1)^{\ell+j+1}\left(y-\alpha\right)^{\ell+j+1}+\sum^{\ell+j}_{i=0}a_{i+1}\binom{\ell+j+1}{i+1}(-1)^{\ell+j+1}\left(y-\alpha\right)^{\ell+j-i}\right)\\
&&+\sum^{\ell}_{j=0}\binom{\ell}{j}\frac{(-1)^{j}x^{\ell -j}}{k+j+1}\sum^{k+j+1}_{r=0}(-1)^{k+j+1}\binom{k+j+1}{r}\sum^{r}_{i=0}a_{i}\binom{r}{i}(-1)^{r}\left(z-\alpha\right)^{k+j+1-r}\\
&=&ca_{0}+\sum^{k+\ell}_{i=0}a_{i+1}\sum^{k}_{j=0}\binom{k}{j}\binom{\ell+j+1}{i+1}\frac{(-1)^{j}x^{k-j}}{\ell+j+1}(-1)^{\ell+j+1}\left(y-\alpha\right)^{\ell+j+1-(i+1)}\\
&&+\sum^{k+\ell+1}_{i=1}a_{i}\sum^{\ell}_{j=0}\sum^{k+j+1}_{r=i}\binom{\ell}{j}\binom{k+j+1}{r}\binom{r}{i}\frac{(-1)^{j}x^{\ell-j}}{k+j+1}(-\left(z-\alpha\right))^{k+j+1-r}\\
&=&ca_{0}+\sum^{k+\ell}_{i=0}\frac{c_{i}}{i+1}a_{i+1},
\ens
where

\bns
&&c=\sum^{k}_{j=0}\binom{k}{j}\frac{(-1)^{j}x^{k-j}}{\ell+j+1}(-1)^{\ell+j+1}\left(y-\alpha\right)^{\ell+j+1}\\
&&+\sum^{\ell}_{j=0}\sum^{k+j+1}_{r=0}\binom{\ell}{j}\binom{k+j+1}{r}\frac{(-1)^{j}x^{\ell-j}}{k+j+1}(-\left(z-\alpha\right))^{k+j+1-r},
\ens
which will be evaluated later, and 

\bns
&&(-1)^{\ell}c_{i}=\sum^{k}_{j=0}\binom{k}{j}\binom{\ell+j+1}{i+1}(i+1)\frac{x^{k-j}}{\ell+j+1}
\left(y-\alpha\right)^{\ell+j-i}\\
&&+\sum^{\ell}_{j=0}\sum^{k+j+1}_{r=i+1}\binom{\ell}{j}\binom{k+j+1}{r}\binom{r}{i+1}(i+1)\frac{(-1)^{\ell-j+1}x^{\ell-j}}{k+j+1}(-\left(z-\alpha\right))^{k+j+1-r}\\
&=&\sum^{k}_{j=0}\binom{k}{j}\binom{\ell+j}{i}x^{k-j}\left(y-\alpha\right)^{\ell+j-i}\\
&&+\sum^{\ell}_{j=0}\binom{\ell}{j}\binom{k+j}{i}(-1)^{\ell-j+1}x^{\ell -j}\sum^{k+j+1}_{r=i+1}\binom{k+j-i}{r-i-1}(-\left(z-\alpha\right))^{k+j-i-(r-i-1)}\\
&=&x^{k+\ell-i}\left(\sum^{k}_{j=0}\binom{k}{j}\binom{\ell+j}{i}\left( \frac{y-\alpha}{x}\right)^{\ell+j-i}\right.\\
&&\left. -\sum^{\ell}_{j=0}\binom{\ell}{j}\binom{k+j}{i}(-1)^{\ell -j}\left( 1+\frac{y-\alpha}{x}\right)^{k+j-i}\right)\\
&=&0,
\ens
where the last step is due to the expression in the last parenthesis is zero following from Lemma 3.1 of \cite{Sun03}.

Finally, we evaluate $c$ as follows.

\bns
&&c=(-1)^{\ell+1}\sum^{k}_{j=0}\binom{k}{j}x^{k-j}\frac{\left(y-\alpha\right)^{\ell +j+1}}{\ell +j+1}\\
&&+\sum^{\ell}_{j=0}\binom{\ell}{j}\frac{(-1)^{j}x^{\ell-j}}{k+j+1}\sum^{k+j+1}_{r=0}\binom{k+j+1}{r}
\left(-\left(z-\alpha\right)\right)^{k+j+1-r}\\
&=&(-1)^{\ell +1}\sum^{k}_{j=0}\binom{k}{j}x^{k-j}\int^{y-\alpha}_{0}t^{\ell +j}dt+
\sum^{\ell}_{j=0}\binom{\ell}{j}(-1)^{j}x^{\ell-j}\int^{x+y-\alpha}_{0}t^{k+j}dt\\
&=&(-1)^{\ell +1}\int^{y-\alpha}_{0}t^{\ell}(t+x)^{k}dt +(-1)^{\ell}\int^{x+y-\alpha}_{0}t^{k}(t-x)^{\ell}dt\\
&=&(-1)^{\ell }\left(-\int^{x+y-\alpha}_{x}(s-x)^{\ell}s^{k}ds+\int^{x+y-\alpha}_{0}(t-x)^{\ell}t^{k}dt\right)\\
&=&(-1)^{\ell }\int^{x}_{0}(t-x)^{\ell}t^{k}dt=(-1)^{\ell }\int^{1}_{0}(tx-x)^{\ell}(tx)^{k}xdt\\
&=&x^{k+\ell+1}\int^{1}_{0}(1-t)^{\ell}t^{k}dt=x^{k+\ell +1}\frac{\Gamma (k+1)\Gamma (\ell+1)}{\Gamma (k+\ell +2)}\\
&=&\frac{x^{k+\ell +1}}{(k+\ell +1)\binom{k+\ell}{k}}.
\ens

By taking partial derivative with respect to $y$ on the both sides of \eqref{2-0-7} and noting $z=1+2\alpha-x-y$, one may obtain \eqref{2-0-8}. Replacing $k$ and $\ell$ by $k+1$ and $\ell+1$ in \eqref{2-0-8} taking the partial derivative with respect to $y$, one may obtain \eqref{2-0-9}. The proof is complete. 
\end{proof}
\eop

The following corollary of Theorem \ref{thm:2-1} is equivalent to the results shown in Theorems 1.1 and 1.2 of \cite{Sun03}.

\begin{corollary}\label{cor:2-2} (\cite{Sun03}) 
Let $C_{n,0}(x)$ and $C^{\ast}_{n,0}(x)$ be defined as \eqref{2-0-6}, and let $k,\ell\in {\bN}\cup\{0\}$ and $x+y+z=1$. Then there holds

\be\label{2-0-2}
\sum^{k}_{j=0}(-1)^{j}x^{k-j}\binom{k}{j}\frac{C_{\ell+j+1,0}(y)}{\ell+j+1}+\sum^{\ell}_{j=0}(-1)^{j}x^{\ell-j}\binom{\ell}{j}\frac{C^{\ast}_{k+j+1,0}(z)}{k+j+1}
=\frac{a_{0}x^{k+\ell +1}}{(k+\ell+1)\binom{k+\ell}{k}}.
\ee
In addition, we have 

\be\label{2-0-3}
\sum^{k}_{j=0}(-1)^{j}x^{k-j}\binom{k}{j}C_{\ell+j,0}(y)=\sum^{\ell}_{j=0}(-1)^{j}x^{\ell-j}\binom{\ell}{j}C^{\ast}_{k+j,0}(z)
\ee
and 
\bns
&& \sum^{k}_{j=0}(-1)^{j}(\ell +j+1)x^{k-j+1}\binom{k+1}{j}C_{\ell+j,0}(y)+\sum^{\ell}_{j=0}(-1)^{j}(k+j+1)x^{\ell-j+1}\binom{\ell+1}{j}C^{\ast}_{k+j,0}(z)\nonumber\\
&=&(k+\ell+2)\left( (-1)^{k}C_{k+\ell+1, 0}(y)+(-1)^{\ell}C^{\ast}_{k+\ell+1,0}(z)\right).
\ens
If $a_{k}=B_{k}$ or $(-1)^{k}B_{k}$, then we have 

\bn\label{2-0-2-2}
&&(-1)^{\ell +1}\sum^{k}_{j=0}x^{k-j}\binom{k}{j}\frac{B_{\ell+j+1}(y)}{\ell+j+1}+(-1)^{k+1}\sum^{\ell}_{j=0}x^{\ell-j}\binom{\ell}{j}\frac{B_{k+j+1}(z)}{k+j+1}\nonumber\\
&=&\frac{x^{k+\ell +1}}{(k+\ell+1)\binom{k+\ell}{k}}
\en
In addition, we have 

\be\label{2-0-3-2}
(-1)^{\ell}\sum^{k}_{j=0}x^{k-j}\binom{k}{j}B_{\ell+j}(y)=(-1)^{k}\sum^{\ell}_{j=0}x^{\ell-j}\binom{\ell}{j}B_{k+j}(z)
\ee
and 

\bns
&&(-1)^{\ell}\sum^{k}_{j=0})\ell +j+1)x^{k-j}\binom{k+1}{j}B_{\ell+j}(y)+(-1)^{k}\sum^{\ell}_{j=0}(k+j+1)x^{\ell-j}\binom{\ell+1}{j}B_{k+j}(z)\\
&=&(k+\ell+2)\left( (-1)^{k}B_{k+\ell+1}(y)+(-1)^{\ell}B_{k+\ell+1}(z)\right).
\ens
\end{corollary}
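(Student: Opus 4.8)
The plan is to derive Corollary \ref{cor:2-2} directly from Theorem \ref{thm:2-1} by specializing the parameter $\alpha$ and the sequence $\{a_n\}$. First I would set $\alpha=0$ in every assertion of Theorem \ref{thm:2-1}; the constraint $x+y+z=1+2\alpha$ becomes $x+y+z=1$, and $C_{n,\alpha}(x)$, $C^{\ast}_{n,\alpha}(x)$ collapse to $C_{n,0}(x)$, $C^{\ast}_{n,0}(x)$. This immediately yields \eqref{2-0-2}, \eqref{2-0-3}, and the third (unnumbered) displayed identity, since these are literally \eqref{2-0-7}, \eqref{2-0-8}, \eqref{2-0-9} with $\alpha=0$. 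So the first half of the corollary requires essentially no work beyond bookkeeping.

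The substantive part is the specialization $a_k = B_k$ or $a_k=(-1)^kB_k$. By Theorem \ref{thm:-1-2}, $\{(-1)^nB_n\}_{n\ge 0}$ is self-dual with respect to $D_1$ (the dual relationship attached to $R_1$), which means precisely that if we take $a_k=(-1)^kB_k$ then $a_k^{\ast}=a_k$, i.e. $a_k^{\ast}=(-1)^kB_k$ as well; dually, if $a_k=B_k$ then by the inverse relation $a_k^{\ast}=(-1)^kB_k$. In either case one of the sequences $\{a_k\},\{a_k^{\ast}\}$ is $\{B_k\}$ and the other is $\{(-1)^kB_k\}$. I would then compute, for $a_k=(-1)^kB_k$,
\[
C_{n,0}(x)=(-1)^n\sum_{k=0}^{n}\binom{n}{k}(-1)^kB_k x^{n-k}
=\sum_{k=0}^{n}\binom{n}{k}B_k(-x)^{n-k}(-1)^{?}
\]
— more carefully, $(-1)^n(-1)^k x^{n-k}=(-1)^{n-k}x^{n-k}\cdot(-1)^{2k}\cdot$ wait, $(-1)^n(-1)^k=(-1)^{n+k}=(-1)^{n-k}$, so $C_{n,0}(x)=\sum_{k=0}^n\binom{n}{k}B_k(-x)^{n-k}=B_n(-x)$ by \eqref{0-4}. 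Similarly $C^{\ast}_{n,0}(x)=(-1)^n\sum_{k=0}^n\binom nk B_k x^{n-k}=(-1)^nB_n(x)$. Substituting these two evaluations (with the roles of $C$ and $C^{\ast}$ as dictated by which of $a,a^{\ast}$ equals $B_k$) into \eqref{2-0-2}, \eqref{2-0-3}, and the third identity, and using $a_0=B_0=1$, produces \eqref{2-0-2-2}, \eqref{2-0-3-2}, and the final display; the sign factors $(-1)^{\ell+1}$, $(-1)^{k+1}$, $(-1)^{\ell}$, $(-1)^{k}$ appearing there are exactly what emerges from absorbing the $(-1)^{\cdot}$ in $C_{\ell+j+1,0}(y)=B_{\ell+j+1}(-y)$ versus $C^{\ast}_{k+j+1,0}(z)=(-1)^{k+j+1}B_{k+j+1}(z)$ and collecting the parity of $j$ against the alternating sum. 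One should also note the case $a_k=(-1)^kB_k$ and $a_k=B_k$ give the same final identities with $y$ and $z$ (equivalently $C$ and $C^{\ast}$) interchanged, which is consistent since \eqref{2-0-2-2}–\eqref{2-0-3-2} are symmetric under $(k,y)\leftrightarrow(\ell,z)$.

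The main obstacle will be purely one of sign discipline: tracking the interplay between the explicit $(-1)^n$ in the definition \eqref{2-0-6}, the $(-1)^k$ inside $a_k=(-1)^kB_k$, the $(-1)^j$ in the outer summations of \eqref{2-0-2}–\eqref{2-0-3}, and the shift of argument $B_n(-x)$ versus $B_n(x)$ via the reflection formula $B_n(1-x)=(-1)^nB_n(x)$ (equivalently $B_n(-x)=(-1)^nB_n(x+1)$), so that the constraint $x+y+z=1$ is used correctly when rewriting $B_n(-y)$ or $B_n(-z)$ in terms of $B_n$ evaluated at points summing appropriately. Since Theorem \ref{thm:2-1} has already been established, no new analytic content is needed; I would simply carry out these substitutions carefully and remark that the resulting identities coincide with Theorems 1.1 and 1.2 of \cite{Sun03}.
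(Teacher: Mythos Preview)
Your overall strategy --- specialize $\alpha=0$ in Theorem \ref{thm:2-1}, then further specialize the sequence to Bernoulli numbers --- is exactly what the paper does (the corollary is stated without a separate argument, as an immediate specialization of Theorem \ref{thm:2-1}), so the first half of your proposal needs no change.

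There is, however, a concrete error in the Bernoulli step. You assert that ``if $a_k=B_k$ then by the inverse relation $a_k^{\ast}=(-1)^kB_k$,'' and conclude that in either case one of $\{a_k\},\{a_k^{\ast}\}$ is $\{B_k\}$ and the other is $\{(-1)^kB_k\}$. This is false: by \eqref{0-8}--\eqref{0-12} (Theorem \ref{thm:1.1}) the $D_1$-dual of $\{B_k\}$ is $B_k^{\ast}=(-1)^kB_k+k$, not $(-1)^kB_k$. In particular your line $C^{\ast}_{n,0}(x)=(-1)^n\sum_k\binom{n}{k}B_kx^{n-k}=(-1)^nB_n(x)$ does not follow from either choice you have described, since it would require $a_k^{\ast}=B_k$. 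The clean route is the self-dual choice $a_k=(-1)^kB_k$ (Theorem \ref{thm:-1-2}), for which $a_k^{\ast}=(-1)^kB_k$ as well and hence \emph{both} $C_{n,0}$ and $C^{\ast}_{n,0}$ coincide with $B_n(-\cdot)$; the stated identities \eqref{2-0-2-2}--\eqref{2-0-3-2} then come out after the reflection $B_n(1-t)=(-1)^nB_n(t)$ combined with a substitution preserving $x+y+z=1$, which is precisely the ``sign discipline'' you flag at the end. So the repair is local, but as written the mistaken dual pair would derail the computation before you reach that point.
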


The conjugate Bernoulli polynomials $\tilde B_{n}(x)$ is introduced in \cite{HS15} by their generating function as follows.

\be\label{6-5}
\frac{e^{xt}}{1+\frac{1+t-e^{t}}{t}}=\sum^\infty_{n=0}\tilde B_n(x)\frac{t^n}{n!},
\ee
where the first few terms of the conjugate polynomial sequence $\{ \tilde B_{n}\}_{n\geq 0}$ are 

\bns
&&\tilde B_{0}(x)=1,\\
&&\tilde B_{1}(x)=x+\frac{1}{2},\\
&&\tilde B_{2}(x)=x^{2}+x+\frac{5}{6},\\
&&\tilde B_{3}(x)=x^{3}+\frac{3}{2}x^{2}+\frac{5}{2}x+2,\\
&&\tilde B_{4}(x)=x^{4}+2x^{3}+5x^{2}+8x+\frac{191}{30},\\
&&\tilde B_{5}(x)=x^{5}+\frac{5}{2}x^{4}+\frac{25}{3}x^{3}+20x^{2}+\frac{191}{6}x+\frac{76}{3}, etc.
\ens

Let $\{ \tilde B_{n}(x)\}_{n\geq 0}$ be the conjugate Bernoulli polynomial sequence defined by \eqref{6-5}, and let the conjugate Bernoulli numbers $\{ \tilde B_{n}\}_{n\geq 0}$ be defined by $\tilde B_{n}=\tilde B_{n}(0)$. Then we may find that   

\be\label{6-6}
\tilde B_{n}(x)=\sum^{n}_{k=0}\binom{n}{k}x^{n-k}\tilde B_{k}
\ee
for all $n\geq 0$.

We define dual sequence of the conjugate Bernoulli number sequence with respect to inverse matrix $R_{1}$. Then the corresponding dual polynomial sequence is  

\be\label{6-7}
\tilde B^{\ast}_{n}(x):=\sum^{n}_{k=0}\binom{n}{k}x^{n-k}\tilde B^{\ast}_{k}
\ee
for all $n\geq 0$. From Theorem \ref{thm:2-1}, we obtain 

\begin{corollary}\label{cor:2-3}
Let $k,\ell \in{\bN}\cup \{0\}$ and $x+y+z=1$. Then there holds 

\bn\label{2-12}
&&(-1)^{k}\sum^{k}_{j=0}\binom{k}{j}x^{k-j}(-1)^{j+1}\frac{\tilde B_{\ell+j+1}(-y)}{\ell+j+1}+\sum^{\ell}_{j=0}\binom{\ell}{j}x^{\ell -j}\frac{\tilde B^{\ast}_{k+j+1}(-z)}{k+j+1}\nonumber\\
&=&\frac{(-1)^{k +1}x^{k+\ell +1}}{(k+\ell +1)\binom{k+\ell}{k}},
\en
where $\tilde B_{n}(t)$ are defined by \eqref{6-5}, and $\tilde B^{\ast}_{n}(x)$ are defined by \eqref{6-7}. Also 

\be\label{2-13}
(-1)^{k}\sum^{k}_{j=0}\binom{k}{j}(-x)^{k-j}\tilde B_{\ell+j}(-y)=(-1)^{\ell}\sum^{\ell}_{j=0}\binom{\ell}{j}(-x)^{\ell-j}\tilde B^{\ast}_{k+j}(-z)
\ee
and

\bn\label{2-14}
&&(-1)^{\ell}\sum^{k}_{j=0}\binom{k+1}{j}(-x)^{k-j+1}(\ell+j+1)\tilde B_{\ell+j}(-y)\nonumber\\
&&+(-1)^{k}\sum^{\ell}_{j=0}\binom{\ell+1}{j}(-x)^{\ell-j+1}(k+j+1)\tilde B^{\ast}_{k+j}(-z)\nonumber\\
&=& (k+\ell+2)((-1)^{\ell+1}\tilde B_{k+\ell+1}(-y)+(-1)^{k+1}\tilde B^{\ast}_{k+\ell_+1}(-z)).
\en
\end{corollary}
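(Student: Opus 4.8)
The plan is to obtain Corollary~\ref{cor:2-3} as a direct specialization of Theorem~\ref{thm:2-1}, with the sequence $\{a_n\}$ chosen to be the conjugate Bernoulli numbers $\{\tilde B_n\}$ and $\alpha=0$. The first step is to identify $C_{n,0}(x)$ and $C^{\ast}_{n,0}(x)$ explicitly for this choice. From \eqref{2-0-6} and \eqref{6-6} we have $C_{n,0}(x)=(-1)^n\sum_{k=0}^n\binom nk\tilde B_k x^{n-k}=(-1)^n\tilde B_n(x)$, so after the substitution $x\mapsto -y$ (respectively the relevant argument), $C_{\ell+j,0}(y)$ contributes a factor $(-1)^{\ell+j}\tilde B_{\ell+j}(y)$; matching this against the signs $(-x)^{k-j}=(-1)^{k-j}x^{k-j}$ and $(-1)^j$ already present in \eqref{2-0-2}--\eqref{2-0-3} is where the bookkeeping of signs in \eqref{2-12}--\eqref{2-14} comes from. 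The dual side is handled the same way: by definition $C^{\ast}_{n,0}(x)=(-1)^n\sum_{k=0}^n\binom nk\tilde B^{\ast}_k x^{n-k}=(-1)^n\tilde B^{\ast}_n(x)$ by \eqref{6-7}, so $C^{\ast}_{k+j,0}(z)$ carries a factor $(-1)^{k+j}\tilde B^{\ast}_{k+j}(z)$.

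Next I would run the evaluation $y\mapsto -y$, $z\mapsto -z$ (note $x+y+z=1$ is equivalent to $x+(-y)+(-z)=1-2y-2z$ — here one must be careful: the right substitution is to apply Corollary~\ref{cor:2-2} with arguments $-y$ and $-z$, so the constraint becomes $x-y-z=1$, i.e. $x+y+z = $ something; I would instead keep $x+y+z=1$ and feed $(x,y,z)$ directly into \eqref{2-0-2} etc., then replace $y$ by $-y$ and $z$ by $-z$ only inside the polynomial evaluations via $\tilde B_n(-y)$). Concretely: start from \eqref{2-0-2} with $a_k=\tilde B_k$, substitute $C_{\ell+j+1,0}(y)=(-1)^{\ell+j+1}\tilde B_{\ell+j+1}(y)$ and $C^{\ast}_{k+j+1,0}(z)=(-1)^{k+j+1}\tilde B^{\ast}_{k+j+1}(z)$, pull the $j$-independent sign factors $(-1)^{\ell+1}$ and $(-1)^{k+1}$ out of the two sums, use $a_0=\tilde B_0=1$ on the right-hand side, and finally relabel $y\mapsto -y$, $z\mapsto -z$ to match the statement's $\tilde B_{\ell+j+1}(-y)$ and $\tilde B^{\ast}_{k+j+1}(-z)$. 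This yields \eqref{2-12}. Equations \eqref{2-13} and \eqref{2-14} then follow identically by feeding the same substitutions into \eqref{2-0-3} and into the third displayed identity of Corollary~\ref{cor:2-2}, again tracking the $(-1)^{\ell+j}$ and $(-1)^{k+j}$ factors that convert $C$ into $\tilde B$ and noticing that $(-1)^{j}x^{k-j}$ combines with $(-1)^{k-j}$ to give $(-x)^{k-j}$ as written.

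The only genuine point to check — and the step I expect to be the main obstacle — is the consistency of the argument substitution with the linear constraint $x+y+z=1$. In Corollary~\ref{cor:2-2} the three arguments of the $C$'s are $x$ (the expansion point), $y$, and $z$ with $x+y+z=1$; in \eqref{2-12}--\eqref{2-14} the conjugate polynomials are evaluated at $-y$ and $-z$ while the powers of $x$ remain genuine powers of $x$, and the stated hypothesis is still $x+y+z=1$. One must verify that replacing $y\mapsto -y,\ z\mapsto -z$ in the polynomial arguments while leaving the constraint as $x+y+z=1$ is exactly the instance of Corollary~\ref{cor:2-2} obtained by applying it with parameters $(x, -y, -z)$ under the constraint $x-y-z=1$ — i.e., one applies the Corollary to the triple $(x',y',z')=(x,-y,-z)$ satisfying $x'+y'+z'=1$ whenever $x-y-z=1$. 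Reconciling which of $x+y+z=1$ or $x-y-z=1$ is actually needed requires care, and I would resolve it by writing out the $k=\ell=0$ case of \eqref{2-12} as a sanity check: it should reduce to $-\tilde B_1(-y)+\tilde B^{\ast}_1(-z)=-x$, and using $\tilde B_1(x)=x+\tfrac12$ together with $\tilde B^{\ast}_1$ computed from $\tilde B^{\ast}_1=\sum_{k}\binom1k(-1)^k\tilde B_k=\tilde B_0-\tilde B_1=\tfrac12$ pins down the correct constraint. Once the constraint is confirmed, everything else is the routine sign-tracking sketched above, and the proof is complete.
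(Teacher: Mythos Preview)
Your approach is exactly the paper's: the text simply says ``From Theorem~\ref{thm:2-1}, we obtain'' Corollary~\ref{cor:2-3}, so the intended proof is precisely to set $a_k=\tilde B_k$, $\alpha=0$, use $C_{n,0}(x)=(-1)^n\tilde B_n(x)$ and $C^{\ast}_{n,0}(x)=(-1)^n\tilde B^{\ast}_n(x)$, and read off the three identities from \eqref{2-0-7}--\eqref{2-0-9}.

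Your hesitation about the constraint is well placed and is not a defect of your argument but of the displayed formulas. Carrying out your own sanity check at $k=\ell=0$: with $\tilde B_1(t)=t+\tfrac12$ and $\tilde B^{\ast}_1(t)=t+\tfrac12$, equation \eqref{2-12} reads $-\tilde B_1(-y)+\tilde B^{\ast}_1(-z)=-x$, i.e.\ $y-z=-x$, which is \emph{not} $x+y+z=1$. The clean specialization of Theorem~\ref{thm:2-1} (no $-y,-z$ relabeling) gives, under $x+y+z=1$,
\[
(-1)^{\ell+1}\sum_{j=0}^{k}\binom{k}{j}x^{k-j}\frac{\tilde B_{\ell+j+1}(y)}{\ell+j+1}
+(-1)^{k+1}\sum_{j=0}^{\ell}\binom{\ell}{j}x^{\ell-j}\frac{\tilde B^{\ast}_{k+j+1}(z)}{k+j+1}
=\frac{x^{k+\ell+1}}{(k+\ell+1)\binom{k+\ell}{k}},
\]
and the printed \eqref{2-12}--\eqref{2-14} are this identity (and its two companions) after an extra, inconsistently recorded sign/argument substitution. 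So your method is correct and matches the paper; the ``main obstacle'' you anticipated is a typographical inconsistency in the target formulas, not a gap in the derivation.
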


In \eqref{2-0-6}, substituting $\alpha =1/2$ and 

\be\label{2-15}
a_{k}=E_{k}\left( \frac{1}{2}\right)-\frac{1}{2^{k}}\quad \text{and} \quad a^{\ast}_{k}=(-1)^{k}\left( E_{k}\left( \frac{1}{2}\right) +\frac{3^{k}-2}{2^{k}}\right),
\ee
where the duals $a^{\ast}_{k}$ are derived by using \eqref{0-16} of Theorem \ref{thm:1.2}, we obtain 

\[
A_{n}(x)=(-1)^{n}E_{n}(x)-(-1)^{n}x^{n}\quad \text{and}\quad A^{\ast}_{n}(x)=E_{n}(-x+1)+(2-x)^{n}-2(1-x)^{n}.
\]
Hence, Theorem \ref{thm:2-1} implies 

\begin{corollary}\label{cor:2-4}
Let $C_{n,\alpha}(x)$ and $C^{\ast}_{n,\alpha}(x)$ be defined as \eqref{2-0-6} with $\alpha =1/2$, and let $k,\ell\in {\bN}\cup\{0\}$ and $x+y+z=2$. For $a_{k}$ and $a^{\ast}_{k}$ 
shown in \eqref{2-15}, there hold

\bn\label{2-16}
&&\sum^{k}_{j=0}(-1)^{\ell +1}x^{k-j}\binom{k}{j}\frac{E_{\ell+j+1}(y)}{\ell+j+1}+\sum^{\ell}_{j=0}(-1)^{j}x^{\ell-j}\binom{\ell}{j}\frac{E_{k+j+1}(-z+1)}{k+j+1}\nonumber\\
&=&(-1)^{\ell+1}\int^{y}_{0}t^{\ell}(t+x)^{k}dt-\int^{x+y}_{0}t^{k}(x-t)^{\ell}dt+2\int^{x+y-1}_{0}t^{k}(x-t)^{\ell}dt.\nonumber\\
\en
In addition, we have 

\bn\label{2-17}
&&(-1)^{\ell}\sum^{k}_{j=0}x^{k-j}\binom{k}{j}E_{\ell+j}(y)-(-1)^{\ell}y^{\ell}(x+y)^{k}\nonumber\\
&=&\sum^{\ell}_{j=0}(-1)^{j}x^{\ell-j}\binom{\ell}{j}E_{k+j}(1-z)+(x+y)^{k}(-y)^{\ell}-2(x+y-1)^{k}(1-y)^{\ell}.
\en
\end{corollary}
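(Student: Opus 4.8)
The plan is to read both identities off of Theorem \ref{thm:2-1}: identity \eqref{2-16} from the specialization of \eqref{2-0-7} to $\alpha=1/2$ (so that $x+y+z=1+2\alpha=2$), and \eqref{2-17} from \eqref{2-16} by differentiating in $y$. First I would record the two closed forms already noted just before the corollary, namely $C_{n,1/2}(y)=(-1)^n\bigl(E_n(y)-y^n\bigr)$ and $C^{\ast}_{n,1/2}(z)=E_n(1-z)+(2-z)^n-2(1-z)^n$; these follow from the substitution \eqref{2-15} by the Euler addition formula $E_n(u+v)=\sum_k\binom nk E_k(v)u^{n-k}$, the reflection $E_n(1-u)=(-1)^nE_n(u)$ (which also gives $(-1)^kE_k(1/2)=E_k(1/2)$), and the binomial theorem. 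The one numerical input that makes the corollary clean is $a_0=E_0(1/2)-1=0$, so the right-hand side of \eqref{2-0-7} is $0$.

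Next I would substitute these two closed forms into \eqref{2-0-7} and peel off the monomial pieces. In the first sum, $(-1)^jC_{\ell+j+1,1/2}(y)=(-1)^{\ell+1}\bigl(E_{\ell+j+1}(y)-y^{\ell+j+1}\bigr)$; writing $y^{m}/m=\int_0^y t^{m-1}\,dt$ and using $\sum_j\binom kj x^{k-j}t^{\ell+j}=t^{\ell}(t+x)^k$ turns the $y$-power contribution into $(-1)^{\ell+1}\int_0^y t^{\ell}(t+x)^k\,dt$. In the second sum, $C^{\ast}_{k+j+1,1/2}(z)=E_{k+j+1}(1-z)+(2-z)^{k+j+1}-2(1-z)^{k+j+1}$ and, since $z=2-x-y$, we have $2-z=x+y$ and $1-z=x+y-1$; the same device with $\sum_j\binom{\ell}{j}x^{\ell-j}(-t)^j=(x-t)^{\ell}$ converts the two power families into $-\int_0^{x+y}t^k(x-t)^{\ell}\,dt+2\int_0^{x+y-1}t^k(x-t)^{\ell}\,dt$. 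Because the left-hand side of \eqref{2-0-7} vanishes, collecting the three integral terms is exactly \eqref{2-16}.

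For \eqref{2-17} I would differentiate \eqref{2-16} with respect to $y$, exactly as \eqref{2-0-8} was obtained from \eqref{2-0-7}: use $\partial_u E_n(u)=nE_{n-1}(u)$, note $\partial z/\partial y=-1$ hence $\partial(1-z)/\partial y=1$, and apply the fundamental theorem of calculus to the three integrals (whose integrands at the upper limits produce $y^{\ell}(x+y)^k$, $(x+y)^k(-y)^{\ell}$, and $(x+y-1)^k(1-y)^{\ell}$). Rearranging the resulting equation yields \eqref{2-17}.

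The substance of the argument, and the only place slips are likely, is the sign bookkeeping in the middle step: correctly combining the $(-1)^j$ in \eqref{2-0-7}, the $(-1)^{\ell+j+1}$ inside $C_{\ell+j+1,1/2}$, the $(-1)^n$ inside $C^{\ast}_{n,1/2}$, and the substitutions $2-z=x+y$, $1-z=x+y-1$, so that the three leftover monomial sums match the three integrals on the right of \eqref{2-16} with the stated signs. Everything else is routine use of Theorem \ref{thm:2-1} and differentiation under a finite sum.
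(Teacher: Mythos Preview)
Your approach is correct and is exactly what the paper intends: it states the corollary as a direct consequence of Theorem \ref{thm:2-1} (with $\alpha=1/2$, the closed forms $C_{n,1/2}(y)=(-1)^n(E_n(y)-y^n)$ and $C^{\ast}_{n,1/2}(z)=E_n(1-z)+(2-z)^n-2(1-z)^n$ recorded just before the statement, and $a_0=0$), and obtains \eqref{2-17} from \eqref{2-16} by differentiating in $y$. Your sign bookkeeping and the integral rewriting of the monomial pieces are carried out correctly.
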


The above identities of polynomial sequences can be used to establish identities of number sequences. For instance, if $x=1$, $y=0$, and $z=1$, then \eqref{2-16} yields the number sequence identity

\bns
&&\sum^{k}_{j=0}(-1)^{\ell +1}x^{k-j}\binom{k}{j}\frac{E_{\ell+j+1}(0)}{\ell+j+1}\\
&&+\sum^{\ell}_{j=0}(-1)^{j}x^{\ell-j}\binom{\ell}{j}\frac{E_{k+j+1}(0)}{k+j+1}=-\frac{1}{(k+\ell+1)\binom{k+\ell}{k}}.
\ens
If $x=1$ and $y=z=1/2$, from \eqref{2-16} there holds 

\bns
&&\sum^{k}_{j=0}(-1)^{\ell +1}x^{k-j}\binom{k}{j}\frac{E_{\ell+j+1}(1/2)}{\ell+j+1}+\sum^{\ell}_{j=0}(-1)^{j}x^{\ell-j}\binom{\ell}{j}\frac{E_{k+j+1}(1/2)}{k+j+1}\\
&=&-\frac{1}{(k+\ell+1)\binom{k+\ell}{k}}+2B\left( \frac{1}{2}, k+1, \ell+1\right)-2B\left( \frac{3}{2}, k+1, \ell+1\right),
\ens
where $B(\alpha, a,b)=\int^{\alpha}_{0}t^{a-1}(1-t)^{b-1}dt$ is an incomplete beta function. From the last two identities, we have 

\bns
&&\sum^{k}_{j=0}(-1)^{\ell +1}x^{k-j}\binom{k}{j}\frac{E_{\ell+j+1}(1/2)-E_{\ell+j+1}(0)}{\ell+j+1}+\sum^{\ell}_{j=0}(-1)^{j}x^{\ell-j}\binom{\ell}{j}\frac{E_{k+j+1}(1/2)-E_{k+j+1}(0)}{k+j+1}\\
&&=2\left(B\left( \frac{1}{2}, k+1, \ell+1\right)-B\left( \frac{3}{2}, k+1, \ell+1\right)\right).
\ens

Let $A$ and $B$ be any $m\times m$ and $n\times n$ square matrices, respectively. Cheon and Kim \cite{CK} use the notation $\oplus$ for the direct sum of matrices $A$ and $B$:

\be\label{-1}
A\oplus B=\left[ \begin{array} {ll} A & 0\\ 0 &B\end{array}\right].
\ee
Hence, we may obtain a matrix form of Lemma 3.2 of Pan and Sun \cite{PS}

 \begin{theorem}\label{thm:6-5} 
Let $n$ be any positive integer, and let $\tilde B(t)$ and $P[t]$ be defined as 

\bns
&&\tilde B(t):=(\tilde B_{0}(t), \tilde B_{1}(t), \cdots)\quad and \\ 
&&P[t]:=\left( \binom{n}{k} t^{n-k} \right)_{n,k\geq 0},
\ens
respectively. Denote $(x)=(1,x,x^{2},\ldots)^{T}$, $D=diag(0,1,1/2, \ldots)$, 
and $[X]=[0]\oplus \left[ \frac{x^{n-k}}{k}\right]_{1\leq k\leq n}$, i.e., 

\[
[X]=\left[\begin{array}{lllll}0 & 0 &0 & \cdots & 0 \\ 0 & 1 & 0 & \cdots & 0 \\ 0 & x & \frac{1} 2 & \cdots & 0\\ \vdots & \vdots & \vdots & \ddots & \vdots \\ 0 & x ^{n-1} & \frac{x^{n-2}}2 & \cdots & \frac{1}n\\ \vdots & \vdots & \vdots & \ddots & \vdots \end{array}\right].
\]
Then

\be\label{-2}
[X] \tilde B(x+y)=P[x]D\tilde B(y)+[X](x),
\ee
which implies 

\be\label{-3}
\sum_{k=1}^{n} \frac{ \tilde B_{k} (x+y)}{k} x^{n-k} = \sum_{l=1}^{n} {n \choose l} \frac{\tilde B_{l}(y)}{l} x^{n-l}+ H_{n} x^{n}
\ee
for all $n\geq 0$ shown in  \cite{PS}.
\end{theorem}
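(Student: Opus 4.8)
The plan is to prove the scalar identity \eqref{-3} and then observe that the matrix identity \eqref{-2} is nothing more than \eqref{-3} written out coordinate by coordinate. Indeed, from the definitions the $n$-th entry of $[X]\tilde B(x+y)$ is $\sum_{k=1}^{n}\frac{x^{n-k}}{k}\tilde B_k(x+y)$, the $n$-th entry of $P[x]D\tilde B(y)$ is $\sum_{l=1}^{n}\binom{n}{l}\frac{\tilde B_l(y)}{l}x^{n-l}$ (column $0$ drops out because of the leading $0$ in $D$), and the $n$-th entry of $[X](x)$ is $\sum_{k=1}^{n}\frac{x^{n-k}}{k}\,x^{k}=H_{n}x^{n}$; so the two identities say the same thing, and for $n=0$ both reduce to $0=0$.

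The proof of \eqref{-3} rests on the addition formula
\[
\tilde B_n(x+y)=\sum_{k=0}^{n}\binom{n}{k}x^{n-k}\tilde B_k(y),\qquad n\ge 0 ,
\]
which is a purely formal consequence of \eqref{6-6}: expanding each $\tilde B_k(y)$ on the right by \eqref{6-6}, interchanging the two finite sums, applying the subset-of-a-subset identity $\binom{n}{k}\binom{k}{i}=\binom{n}{i}\binom{n-i}{k-i}$, and collapsing $\sum_{k\ge i}\binom{n-i}{k-i}x^{n-k}y^{k-i}=(x+y)^{n-i}$ turns the right side into $\sum_{i}\binom{n}{i}(x+y)^{n-i}\tilde B_i$, which is $\tilde B_n(x+y)$ by \eqref{6-6} again. (Equivalently, differentiating \eqref{6-5} in $x$ gives $(\tilde B_n)'(x)=n\tilde B_{n-1}(x)$, that is, $\{\tilde B_n\}$ is an Appell sequence, from which the same formula follows.)

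Next I would substitute this addition formula into the left-hand side of \eqref{-3}, obtaining the double sum $\sum_{k=1}^{n}\sum_{j=0}^{k}\frac1k\binom{k}{j}x^{n-j}\tilde B_j(y)$, and interchange the order of summation. The diagonal term $j=0$ gives, since $\tilde B_0\equiv 1$, the contribution $x^{n}\sum_{k=1}^{n}\frac1k=H_n x^n$. For $j\ge 1$ the inner sum $\sum_{k=j}^{n}\frac1k\binom{k}{j}$ is evaluated via the elementary identity $\frac1k\binom{k}{j}=\frac1j\binom{k-1}{j-1}$ together with the hockey-stick identity $\sum_{k=j}^{n}\binom{k-1}{j-1}=\binom{n}{j}$, so it equals $\frac1j\binom{n}{j}$; summing over $j\ge 1$ recovers $\sum_{j=1}^{n}\binom{n}{j}\frac{\tilde B_j(y)}{j}x^{n-j}$, and together with the $j=0$ term this is exactly the right-hand side of \eqref{-3}. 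I do not anticipate a real obstacle: the whole argument is index manipulation, the only non-mechanical points being the reduction $\frac1k\binom{k}{j}=\frac1j\binom{k-1}{j-1}$, the hockey-stick sum, and keeping the $j=0$ diagonal (which is precisely what produces the harmonic number $H_n$) separate from the rest.
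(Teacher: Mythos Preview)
Your argument is correct and is essentially the same computation as the paper's: both hinge on the addition formula $\tilde B_n(x+y)=\sum_k\binom{n}{k}x^{n-k}\tilde B_k(y)$, the separation of the $j=0$ term (producing $H_n x^n$), and the pair of identities $\frac{1}{k}\binom{k}{j}=\frac{1}{j}\binom{k-1}{j-1}$ and $\sum_{k=j}^{n}\binom{k-1}{j-1}=\binom{n}{j}$. The only difference is organizational: the paper carries out the manipulation in matrix form and reads off \eqref{-3} at the end, whereas you prove \eqref{-3} directly and then note that \eqref{-2} is its coordinate-wise restatement.
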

\begin{proof} Noting that $\tilde B_{0}(y) = 1$, the left-hand side of \eqref{-2} can be written as 

\bns
&&LHS\,\, of \,\, \eqref{-2}\\
&&=[X]P[x]\tilde B(y)\\
&&= [X]\left[\begin{array}{l}1 \\x + {1 \choose 1} \tilde B_{1}(y)  \\ x^{2} +  {2 \choose 1}x \tilde B_  {1}(y) + {2\choose 2}\tilde B_  {2}(y) \\ \vdots  \\ x^{n} + {n \choose 1}x^{n-1} \tilde B_  {1}(y) + \cdots + {n \choose n} \tilde B_  {n}(y)\\\vdots\end{array}\right]\\
&&=  [X]\left( \left[\begin{array}{l}1\\ x \\ x^{2} \\ \vdots \\ x^{n}\\ \vdots\end{array}\right]+\left[\begin{array}{l} 0 \\{1 \choose 1} \tilde B_  {1}(y)  \\ {2 \choose 1}x \tilde B_  {1}(y) + {2\choose 2} \tilde B_  {2}(y) \\ \vdots  \\ {n \choose 1}x^{n-1} \tilde B_  {1}(y) + \cdots + {n \choose n} \tilde B_  {n}(y)\\ \vdots \end{array}\right]\right)\\
& &= \left[\begin{array}{l} 0 \\  x  \\ ( 1+ \frac{1}2) x^{2} \\ \vdots \\ (1+ \frac{1}2 + \cdots + \frac{1}n) x ^{n}  \\ \vdots \end{array}\right] +[X] 
\left[\begin{array}{l} 0 \\{1 \choose 1} \tilde B_  {1}(y)  \\ {2 \choose 1}x \tilde B_  {1}(y) + {2\choose 2} \tilde B_  {2}(y) \\ \vdots  \\ {n \choose 1}x^{n-1} \tilde B_  {1}(y) + \cdots + {n \choose n} \tilde B_  {n}(y)\\ \vdots \end{array}\right],
\ens
where the second term can be written as 
\bns
&&\left[\begin{array}{l}0 \\
\left( \binom{1}{1}+\frac{1} 2 {2 \choose 1}\right) \tilde B_  {1}(y)x +  \frac{1} 2 \binom{2}{2}\tilde B_  {2}(y) \\ \vdots  \\ 
\left( {1 \choose 1} +\frac{1}{2}\binom{2}{1}+\cdots +\frac{1}{n}\binom{n}{1}\right)\tilde B_  {1}(y)x^{n-1} +  \left(\frac{1} 2 {2 \choose 2}+\frac{1}{3}\binom{3}{2}
+\cdots + \frac{1}{n}\binom{n}{2}\right) \tilde B_  {2}(y)x^{n-2}+\cdots +  \frac{1}n  {n \choose n} \tilde B_  {n}(y)\\  \vdots \end{array}\right]\\
&&=\left[\begin{array}{l}0 \\
\binom{2}{1}\tilde B_  {1}(y)x+\frac{1}{2}\binom{2}{2}\tilde B_  {2}(y)\\ \vdots \\
\binom{n}{1}\tilde B_  {1}(y)x^{n-1}+\frac{1}{2}\binom{n}{2}\tilde B_  {2}(y)x^{n-2}+\cdots \frac{1}{n}\binom{n}{n}\tilde B_  {n}(y)\\ \vdots \end{array}\right].
\ens
In the last step we use the identities $\frac{1}{n}\binom{n}{k}=\frac{1}{k}\binom{n-1}{k-1}$ and  

\bns
&&\binom{n}{k}=\binom{n-1}{k-1}+\binom{n-1}{k}=\binom{n-1}{k-1}+\binom{n-2}{k-1}+\binom{n-3}{k}=\cdots\\
&=&\binom{n-1}{k-1}+\binom{n-2}{k-1}+\cdots +\binom{k}{k-1}+\binom{k}{k}
\ens
for $ k, n\in {\bN}$ with $1\leq k\leq n$. Since the last matrix can be written as $P[x]DB(y)$, we obtain \eqref{-2}. By multiplying the matrices of \eqref{-2}, 

$$ 
LHS \,\, of\,\, \eqref{-3}=  H_{n} x^{n}+ \sum_{l=1}^{n} {n \choose l} \frac{\tilde B_  {l}(y)}{l} x^{n-l} = \sum_{l=1}^{n} {n \choose l} \frac{\tilde B_  {l}(y)}{l} x^{n-l}+ H_{n} x^{n}, 
$$
which is the RHS of \eqref{-3}. 
\end{proof}
\eop

\end{document}